\newtheorem{theorem}{Theorem}[section]
\newtheorem{corollary}[theorem]{Corollary}
\newtheorem{lemma}[theorem]{Lemma}
\newtheorem{proposition}[theorem]{Proposition}
\newtheorem*{itheorem}{Theorem}
\newtheorem*{iCorollary}{Corollary}
\newtheorem*{iProposition}{Proposition}
\theoremstyle{definition}
\newtheorem{definition}[theorem]{Definition}
\newtheorem{example}[theorem]{Example}
\numberwithin{equation}{section}
\newcommand{\ZZ}{\mathbb {Z}}
\newcommand{\aff}{\textrm{Aff}}
\def \dow {\underline{\ast }}
\def \up {\overline{\ast }}
\begin{document}


\baselineskip=17pt


\title[Constructing biquandles]{Constructing biquandles}

\author[E. Horvat]{Eva Horvat}
\address{University of Ljubljana\\ Faculty of Education\\
Kardeljeva plo\v s\v cad 16\\
1000 Ljubljana, Slovenia}
\email{eva.horvat@pef.uni-lj.si}

\date{\today }

\begin{abstract}
We define biquandle structures on a given quandle, and show that any biquandle is given by some biquandle structure on its underlying quandle. By determining when two biquandle structures yield isomorphic biquandles, we obtain a relationship between the automorphism group of a biquandle and the automorphism group of its underlying quandle. As an application, we determine the automorphism groups of Alexander and dihedral biquandles. We also discuss product biquandles and describe their automorphism groups. 
\end{abstract}

\subjclass[2010]{20N99}

\keywords{quandle, biquandle, biquandle structure, automorphism group}

\maketitle

\label{sec0}
Recently we have witnessed an outburst of research on racks, quandles and related structures in connection with knot theory \cite{JOY}, \cite{FR}, \cite{FR1}, \cite{KAUF}. Since then, the algebraic study of quandles and their automorphism groups is well under way. Biquandles, as algebraic generalization of quandles, are not so well known. Study of biquandles began with \cite{KAUF1}, and biquandle invariants have been amply used in the theory of virtual and other knots \cite{CAR}, \cite{ISH}, \cite{ASH}, \cite{EL}. The structure of biquandles is more complicated than the quandle structure, and consequently they are harder to understand. We would like to find a way of constructing new biquandles with a chosen structure. \\

In this paper, we explore the relationship between quandles and biquandles. Our study is based on the functor $\mathcal{Q}$, defined in \cite{ASH}. We define biquandle structures on a given quandle. We show that every biquandle $B$ is given by a biquandle structure on its underlying quandle $\mathcal{Q}(B)$. By determining when two biquandle structures are isomorphic, we are able to characterize all biquandles with a given underlying quandle. Using the knowledge of quandles and their automorphism groups together with our results, one may construct a fair amount of new biquandles.\\

By \cite{FR}, a knot $K$ in any 3-manifold defines a fundamental quandle $Q(K)$ that is a knot invariant. By choosing a suitable biquandle structure (that would depend on the ambient 3-manifold rather than the knot), fundamental quandles may be turned into biquandles, which might be more suitable for some purposes (like studying quantum enhancements \cite{NEL2} or parity of knots \cite{MAN}). \\

An advantage of our construction is that it yields biquandles with a chosen structure. It also lays ground for a theoretical (versus computer - based) knowledge about the number of biquandles of a given order. We obtain the following partial result in this direction: 
\begin{iCorollary}{\rm \textbf{\ref{cor1}}} The number of nonisomorphic constant biquandle structures on a quandle $Q$ is equal to the number of conjugacy classes of $Aut(Q)$.  
\end{iCorollary}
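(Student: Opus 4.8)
The plan is to specialize the general isomorphism criterion for biquandle structures (established in the preceding results) to the case of constant structures, and then to recognize the resulting equivalence relation as ordinary conjugacy inside $Aut(Q)$. In outline: constant structures are parametrized by $Aut(Q)$, two of them give isomorphic biquandles precisely when the parametrizing automorphisms are conjugate, and hence isomorphism classes biject with conjugacy classes.

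First I would make the parametrization explicit. A constant biquandle structure on $Q$ is determined by a single automorphism $\sigma \in Aut(Q)$, so that sending each constant structure to its defining automorphism sets up a bijection between constant biquandle structures and elements of $Aut(Q)$. Under this bijection the counting problem is transferred into a question about orbits of the conjugation action of $Aut(Q)$ on itself. Writing $B_\sigma$ for the biquandle obtained from the constant structure $\sigma$, the underlying quandle satisfies $\mathcal{Q}(B_\sigma)=Q$ for every $\sigma$, which is what allows all these biquandles to be compared over the single fixed quandle $Q$.

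Second, I would invoke the earlier characterization of when two biquandle structures yield isomorphic biquandles. Any isomorphism $B_\sigma \to B_\tau$ must be compatible with the common underlying quandle $Q=\mathcal{Q}(B_\sigma)=\mathcal{Q}(B_\tau)$, hence is realized by some $\phi \in Aut(Q)$. Pushing the constant datum $\sigma$ forward along $\phi$ yields the constant datum $\phi\sigma\phi^{-1}$, so $B_\sigma \cong B_\tau$ holds exactly when $\phi\sigma\phi^{-1}=\tau$ for some $\phi \in Aut(Q)$, i.e. exactly when $\sigma$ and $\tau$ are conjugate. I would then conclude that the isomorphism classes of constant biquandle structures are in bijection with the orbits of $Aut(Q)$ acting on itself by conjugation, that is, with the conjugacy classes of $Aut(Q)$, which is the asserted count.

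The hard part will be the converse direction of the second step: showing that \emph{every} biquandle isomorphism $B_\sigma \to B_\tau$ is induced by a quandle automorphism of $Q$ and that it conjugates the constant data exactly as claimed. The forward implication — that conjugate automorphisms produce isomorphic biquandles — is a direct verification, realized by the quandle automorphism $\phi$ itself. For the converse I expect to lean on functoriality of $\mathcal{Q}$, so that any biquandle isomorphism descends to an automorphism of the underlying quandles, together with the fact that the constant datum is recovered from $B_\sigma$ in a manner that transforms equivariantly under such a descended map; verifying this equivariance, and that no extra identifications occur beyond conjugacy, is where the real content lies.
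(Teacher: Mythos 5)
Your proposal is correct and follows essentially the same route as the paper: the paper derives this corollary (implicitly, with no separate proof) from its Corollary 2.10, which specializes the general isomorphism criterion of Proposition 2.8 — a biquandle isomorphism between structure-defined biquandles is exactly a quandle isomorphism $F$ intertwining the structures, $F\beta_y^1=\beta_{F(y)}^2F$ — so that for constant structures on a single quandle this condition reads $F\sigma=\tau F$, i.e.\ conjugacy in $Aut(Q)$. The "hard part" you flag (every biquandle isomorphism descends to a quandle automorphism and transforms the data equivariantly) is precisely the forward direction of Proposition 2.8, proved there via functoriality of $\mathcal{Q}$ and the computation $f(\beta_y^1(x))=f(x\overline{\ast}_1 y)=f(x)\overline{\ast}_2 f(y)=\beta_{f(y)}^2(f(x))$, exactly as you anticipated.
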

Using the characterization of nonisomorphic biquandle structures, we obtain a relationship between the automorphism group of a given biquandle and the automorphism group of its underlying quandle:
\begin{itheorem}{\rm \textbf{\ref{th3}}} Let $B$ be a biquandle with $\mathcal{Q}(B)=Q$ that is given by a biquandle structure $\{\beta _{y}|\, y\in Q\}\subset Aut(Q)$. Then $$Aut(B)\leq N_{Aut(Q)}\left \{ \beta _{y}|\, y\in Q\right \}\;.$$ 
\end{itheorem}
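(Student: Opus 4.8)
The plan is to realize every biquandle automorphism of $B$ as a quandle automorphism of $Q$ that additionally permutes the family $\{\beta_y\}$ by conjugation. First I would observe that since $Q=\mathcal{Q}(B)$ is obtained from $B$ by the functor $\mathcal{Q}$, any biquandle homomorphism is sent by $\mathcal{Q}$ to a quandle homomorphism; applied to an automorphism $\phi\in Aut(B)$ this gives $\mathcal{Q}(\phi)\in Aut(Q)$, and since $\mathcal{Q}$ leaves the underlying set and the underlying map unchanged, $\phi$ itself is a quandle automorphism. This yields the inclusion $Aut(B)\subseteq Aut(Q)$ of subgroups of the symmetric group on the common underlying set, so it remains only to show that each such $\phi$ normalizes the set $\{\beta_y\mid y\in Q\}$.

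Next I would use that $\phi$, being a biquandle automorphism, preserves both operations $\up$ and $\dow$ of $B$, not merely the derived quandle operation $\tr$. By the defining formulas of the biquandle structure, one of these operations is built from $\tr$ and the family $\{\beta_y\}$; in the basic case it reads $x\dow y=\beta_y(x)$. Applying $\phi$ and using $\phi(x\dow y)=\phi(x)\dow\phi(y)$ gives $\phi(\beta_y(x))=\beta_{\phi(y)}(\phi(x))$ for all $x,y\in Q$, that is, $\phi\circ\beta_y=\beta_{\phi(y)}\circ\phi$ as maps on $Q$. Since $\phi$ is invertible, this rearranges to the conjugation relation $\phi\,\beta_y\,\phi^{-1}=\beta_{\phi(y)}$.

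Finally, because $\phi$ is a bijection of $Q$, as $y$ runs over $Q$ the index $\phi(y)$ runs over all of $Q$, so $\phi\{\beta_y\mid y\in Q\}\phi^{-1}=\{\beta_{\phi(y)}\mid y\in Q\}=\{\beta_z\mid z\in Q\}$. Hence $\phi$ stabilizes the set $\{\beta_y\}$ under conjugation, which is exactly the statement $\phi\in N_{Aut(Q)}\{\beta_y\mid y\in Q\}$, completing the desired inclusion. The step I expect to require the most care is the translation of operation-preservation into the conjugation identity: one must check that the defining formula of the biquandle structure lets $\beta_y$ be read off unambiguously from the operation $\dow$, so that preservation of $\dow$ genuinely forces $\phi\,\beta_y\,\phi^{-1}$ to coincide with the structure map $\beta_{\phi(y)}$ on all of $Q$, rather than with some map agreeing with it only on part of $Q$.
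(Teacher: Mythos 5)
Your proposal is correct and is essentially the paper's own argument: the paper proves Theorem \ref{th3} by specializing Proposition \ref{prop2} to $Q_{1}=Q_{2}=Q$ with $\beta ^{1}_{y}=\beta ^{2}_{y}$, and the forward direction of that proposition is exactly your chain of steps (functoriality of $\mathcal{Q}$ makes $\phi $ a quandle automorphism, preservation of the second biquandle operation gives $\phi \beta _{y}=\beta _{\phi (y)}\phi $, and bijectivity of $\phi $ upgrades the resulting conjugation identity to normalization of the set $\{\beta _{y}\mid y\in Q\}$). One cosmetic slip: in the paper's conventions the operation satisfying $x\up y=\beta _{y}(x)$ is $\up $, while $x\dow y=\beta _{y}(x*y)$, so your displayed identity should be written with $\up $; the argument itself is unaffected, and your final worry is vacuous since the $\beta _{y}$ are the given structure maps, determined on all of $Q$ by $\beta _{y}(x)=x\up y$.
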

Moreover, in case of a constant biquandle structure, we obtain
\begin{iCorollary}{\rm \textbf{\ref{th4}}} Let $X_{f}$ be a biquandle with $\mathcal{Q}(X_f)=Q$ that is given by a constant biquandle structure $\{\beta _{y}=f|y\in Q\}$. Then $Aut(X_{f})\cong C_{Aut(Q)}(f)$. 
\end{iCorollary}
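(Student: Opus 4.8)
The plan is to specialize the general bound from Theorem \ref{th3} to the constant case and then prove the reverse inclusion, thereby upgrading the inequality to an isomorphism. By Theorem \ref{th3}, we already know that $Aut(X_f) \leq N_{Aut(Q)}\{\beta_y \mid y \in Q\}$. In the constant setting the biquandle structure is the singleton $\{f\}$ (all $\beta_y$ equal $f$), so the normalizer of this set in $Aut(Q)$ is precisely the set of $\varphi \in Aut(Q)$ with $\varphi f \varphi^{-1} = f$, which is exactly the centralizer $C_{Aut(Q)}(f)$. Thus the content of Theorem \ref{th3} already yields the inclusion $Aut(X_f) \leq C_{Aut(Q)}(f)$, and the remaining work is to show that every element of $C_{Aut(Q)}(f)$ actually defines a biquandle automorphism of $X_f$.

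First I would recall the explicit description of the biquandle operations on $X_f$ in terms of the underlying quandle operation and the map $f$, as provided by the construction preceding Theorem \ref{th3} (the functor $\mathcal{Q}$ and the recovery of $B$ from a biquandle structure). For the forward inclusion I would simply invoke Theorem \ref{th3} together with the observation about the normalizer of a singleton reducing to a centralizer. For the reverse inclusion, I would take an arbitrary $\varphi \in C_{Aut(Q)}(f)$ and verify directly that $\varphi$, viewed as a bijection of the underlying set, respects both biquandle operations. The key point is that since each operation of $X_f$ is built from the quandle operation of $Q$ and from $f$, and since $\varphi$ commutes with $f$ and is a quandle automorphism of $Q$, it will commute with the composite operations as well.

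The main step is the reverse inclusion, and the one place requiring genuine care is checking the compatibility of $\varphi$ with the \emph{second} biquandle operation, the one twisted by $f$. Concretely, if the operations of $X_f$ take the form $x \,\up\, y = (x \triangleright y)$ and $x \,\dow\, y = f(x) \triangleright' y$ (or whatever exact combination the construction dictates), then verifying $\varphi(x \,\up\, y) = \varphi(x)\,\up\,\varphi(y)$ uses only that $\varphi$ is a quandle automorphism, whereas verifying $\varphi(x\,\dow\,y) = \varphi(x)\,\dow\,\varphi(y)$ requires pulling $\varphi$ through $f$, which is exactly where the hypothesis $\varphi f = f \varphi$ is consumed. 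I expect this to be the crux: isolating precisely which biquandle axiom translates into the commutation relation with $f$.

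Once both inclusions are established, I would conclude that the containment is an equality of subgroups of $\mathrm{Sym}(Q)$, and since both $Aut(X_f)$ and $C_{Aut(Q)}(f)$ inherit their group structure from composition of maps, the set-theoretic equality is automatically a group isomorphism, giving $Aut(X_f) \cong C_{Aut(Q)}(f)$. I anticipate that the bulk of the argument is bookkeeping with the explicit operation formulas, with the only conceptual ingredient being the singleton-normalizer-equals-centralizer identity and the correct identification of where $f$ enters the operations.
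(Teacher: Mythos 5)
Your proposal is correct and takes essentially the paper's route: the paper simply applies Proposition \ref{prop2} (which is an if-and-only-if) with $Q_{1}=Q_{2}=Q$ and $\beta^{1}_{y}=\beta^{2}_{y}=f$, obtaining $F\in Aut(X_{f})\Leftrightarrow Ff=fF$ in one line, and your two-step argument --- Theorem \ref{th3} together with the singleton-normalizer-equals-centralizer observation for one inclusion, plus a direct verification that every $\varphi\in C_{Aut(Q)}(f)$ preserves the operations for the other --- merely unpacks the two directions of that same proposition. One small correction to your anticipated bookkeeping: the operations are $x\dow y=f(x*y)$ and $x\up y=f(x)$, so \emph{both} compatibility checks consume the commutation $\varphi f=f\varphi$ (the $\up$ check uses only it, the $\dow$ check uses it together with $\varphi\in Aut(Q)$), not just the one you singled out.
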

In particular, this yields the automorphism groups of Alexander and dihedral biquandles, see Corollary \ref{cor2} and Proposition \ref{prop3}. 

We also study the automorphism group of product biquandles. 
\begin{iProposition}{\rm \textbf{\ref{prop6}}} Let $(Q,*)$ and $(K,\circ )$ be connected quandles, and denote by $B$ their product biquandle. Then  $$Aut(B)\cong Aut(Q)\times Aut(K)\;.$$
\end{iProposition}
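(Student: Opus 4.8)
The plan is to recall the explicit form of the product biquandle, establish the ``easy'' inclusion $Aut(Q)\times Aut(K)\hookrightarrow Aut(B)$ by direct verification, and then prove the reverse inclusion by showing that every biquandle automorphism must respect the product decomposition. Recall that $B$ has underlying set $Q\times K$, its underlying quandle $\mathcal{Q}(B)$ is the product quandle $Q\times K$, and the two biquandle operations act on complementary coordinates: the down operation is $(q_{1},k_{1})\dow(q_{2},k_{2})=(q_{1}*q_{2},\,k_{1})$ and the up operation is $(q_{1},k_{1})\up(q_{2},k_{2})=(q_{1},\,k_{1}\circ k_{2})$. In particular the associated biquandle structure is $\beta_{(q,k)}=\mathrm{id}_{Q}\times R_{k}$, where $R_{k}(k')=k'\circ k$ is the right translation of $K$, so $\{\beta_{(q,k)}\}=\{\mathrm{id}_{Q}\times R_{k}:k\in K\}$.

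First I would treat the inclusion $Aut(Q)\times Aut(K)\leq Aut(B)$. Given $(\varphi,\psi)\in Aut(Q)\times Aut(K)$, set $\Phi=\varphi\times\psi$. A one-line computation using that $\varphi$ and $\psi$ are quandle homomorphisms shows that $\Phi$ preserves both $\dow$ and $\up$, so $\Phi\in Aut(B)$; since $(\varphi,\psi)\mapsto\varphi\times\psi$ is manifestly an injective group homomorphism, this yields the first inclusion.

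The heart of the argument is the reverse inclusion $Aut(B)\leq Aut(Q)\times Aut(K)$, and here connectedness of \emph{both} factors is the essential ingredient. For $x\in B$ let $C_{\dow}(x)$ be the orbit of $x$ under the group generated by the bijections $\{\,\cdot\dow y : y\in B\,\}$. Since $\cdot\dow(q_{2},k_{2})$ is right translation by $q_{2}$ on the $Q$-factor and the identity on $K$, this group is $Inn(Q)\times\{\mathrm{id}_{K}\}$, and connectedness of $Q$ gives $C_{\dow}((q,k))=Q\times\{k\}$. Symmetrically, using connectedness of $K$, the up-orbit satisfies $C_{\up}((q,k))=\{q\}\times K$. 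Now any $\Phi\in Aut(B)$ satisfies $\Phi\circ(\cdot\dow y)\circ\Phi^{-1}=(\cdot\dow\Phi(y))$, so conjugation by $\Phi$ permutes the generating sets of both groups; hence $\Phi$ carries $C_{\dow}$-orbits to $C_{\dow}$-orbits and $C_{\up}$-orbits to $C_{\up}$-orbits, i.e.\ it permutes the two fiber families $\{Q\times\{k\}\}_{k}$ and $\{\{q\}\times K\}_{q}$. Intersecting the images of the two fibers through $(q,k)$ then forces $\Phi(q,k)=(\varphi(q),\psi(k))$ for bijections $\varphi$ of $Q$ and $\psi$ of $K$. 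Substituting this form back into $\Phi(x\dow y)=\Phi(x)\dow\Phi(y)$ gives $\varphi(q_{1}*q_{2})=\varphi(q_{1})*\varphi(q_{2})$, so $\varphi\in Aut(Q)$, and the $\up$-identity likewise gives $\psi\in Aut(K)$. Combining the two inclusions proves $Aut(B)\cong Aut(Q)\times Aut(K)$.

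I expect the main obstacle to be the automorphism-invariant characterization of the two fiber families: one must express ``moving only within a single $Q$-fiber'' purely in terms of the biquandle operations so that it is automatically preserved by every $\Phi\in Aut(B)$, and it is precisely connectedness of $Q$ and of $K$ that makes these operation-orbits coincide with the set-theoretic fibers. Theorem~\ref{th3} supplies the a priori bound $Aut(B)\leq N_{Aut(Q\times K)}\{\beta_{(q,k)}\}$ and could serve as an alternative point of departure, but the direct fiber argument above is cleaner and makes transparent why a factor-swap (which can exist in $Aut(Q\times K)$ when $Q\cong K$) never survives in $Aut(B)$: it would interchange $\dow$ and $\up$.
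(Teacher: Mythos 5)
Your proof is correct, and it follows the same overall architecture as the paper's: embed $Aut(Q)\times Aut(K)$ into $Aut(B)$ via $(f,g)\mapsto f\times g$, then prove surjectivity by showing that every $F\in Aut(B)$ splits coordinate-wise. The one genuine difference lies in how the splitting is obtained. The paper sets $F_{1}=p_{1}\circ F$, $F_{2}=p_{2}\circ F$ and reads the identities $F_{2}(x*y,a)=F_{2}(x,a)$ and $F_{1}(x,a\circ b)=F_{1}(x,a)$ directly off the two homomorphism equations, then uses connectedness of $Q$ and $K$ to conclude $F_{1}(x,a)=f(x)$ and $F_{2}(x,a)=g(a)$. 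You instead use the conjugation identity $\Phi\circ\alpha _{y}\circ\Phi^{-1}=\alpha _{\Phi(y)}$ to show that $\Phi$ permutes $\dow$-orbits and $\up$-orbits, identify those orbits with the fibers $Q\times\{k\}$ and $\{q\}\times K$ via connectedness, and intersect the two fibers through a point. The two mechanisms are equivalent --- the paper's identity $F_{2}(x*y,a)=F_{2}(x,a)$ is precisely the statement that the second coordinate of $F$ is constant on $\dow$-orbits --- but yours is more structural (it is in the spirit of the paper's Lemma \ref{lemma6}, applied to each operation separately) and makes transparent why the two fibrations must be preserved, while the paper's is more economical to verify. One caveat: your preliminary remark that $\mathcal{Q}(B)$ is the product quandle $Q\times K$ is not quite accurate; since $x*y=(x\dow y)\up ^{-1}y$, the underlying quandle operation is $(q_{1},k_{1})*(q_{2},k_{2})=(q_{1}*q_{2},\,k_{1}\circ ^{-1}k_{2})$, i.e.\ the product of $Q$ with the dual of $K$, and the closing remark invoking Theorem \ref{th3} inherits this imprecision. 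Neither claim is used in your argument, so the proof stands as written.
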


The paper is organized as follows. In Section \ref{sec1}, the basic definitions concerning quandles and biquandles are recalled. In Section \ref{sec2}, we define the functor $\mathcal{Q}$ from category of biquandles to the quandle category. Starting from a given quandle $Q$, we impose on $Q$ a biquandle structure to obtain a biquandle $B$ with $\mathcal{Q}(B)=Q$. We show that every biquandle is obtained by such construction, and give some examples. Further, we determine when two biquandle structures are isomorphic, thus giving a characterization of biquandle structures. In Section \ref{sec3}, we apply our results to describe automorphism groups of biquandles. We obtain a relationship between the automorphism group of a biquandle and the automorphism group of its underlying quandle. We determine the automorphism group of biquandles with a constant biquandle structure, which includes the Alexander and dihedral biquandles. In Section \ref{sec4}, we introduce product biquandles and describe their automorphism groups.  

\section{Preliminaries}
\label{sec1}

\begin{definition}\label{def1} A \textbf{quandle} is a set $Q$ with a binary operation $*\colon Q\times Q\to Q$ that satisfies the following axioms:
\begin{enumerate}
\item $x*x=x$ for every $x\in Q$; 
\item the map $S_y\colon Q\to Q$, given by $S_y (x)=x*y$, is a bijection for every $y\in Q$;
\item $(x*y)*z=(x*z)*(y*z)$ for every $x,y,z\in Q$. 
\end{enumerate} 
\end{definition}

In a quandle $(Q,*)$, we will denote by $x*^{-1}y=S_{y}^{-1}(x)$ the unique element $w\in Q$ for which $w*y=x$. 

A map $f\colon Q_{1}\to Q_{2}$ between quandles is called a \textbf{quandle homomorphism} if $f(x*y)=f(x)*f(y)$ for every $x,y\in Q_{1}$. It follows from Definition \ref{def1} that the map $S_{y}$ is in fact an automorphism of the quandle $Q$. We call these automorphisms the \textbf{symmetries} of $Q$. The subgroup $\langle S_{y}|\, y\in Q\rangle \leq Aut(Q)$ is called the \textbf{inner automorphism group} $Inn(Q)$.   

In the following, we recall some examples of quandles:
\begin{itemize}
\item If $G$ is a group, then $a*b=b^{-1}ab$ defines a quandle operation on $G$; the resulting quandle $(G,*)$ is called the \textbf{conjugation quandle}.
\item In any group $G$, the operation, given by $a\circ b=ba^{-1}b$, also defines a quandle.  
\item Define a binary operation on $\ZZ _{n}$ by $i*j=2j-i\mod n$. Then $R_{n}=(\ZZ _{n},*)$ is a quandle, called the \textbf{dihedral quandle}. 
\item Let $\Lambda =\ZZ [t,t^{-1}]$ and let $M$ be a $\Lambda $-module, then $x*y=tx+(1-t)y$ defines a quandle, that is called an \textbf{Alexander quandle}. 
\end{itemize}

\begin{definition}\label{def2} A \textbf{biquandle} is a set $B$ with two binary operations $\dow ,\up \colon B\times B\to B$ that satisfy the following axioms:
\begin{enumerate}
\item $x\dow x=x\up x$ for every $x\in B$; 
\item the maps $\alpha _y, \beta _y \colon B\to B$ and $S\colon B\times B\to B\times B$, given by $\alpha _y (x)=x\dow y$, $\beta _y (x)=x\up y$ and $S(x,y)=(y\up x,x\dow y)$ are bijections for every $y\in B$;
\item the exchange laws \begin{xalignat*}{1}
& (x\dow y)\dow (z\dow y)=(x\dow z)\dow (y\up z)\;,\\
& (x\dow y)\up (z\dow y)=(x\up z)\dow (y\up z)\textrm{ and }\\
& (x\up y)\up (z\up y)=(x\up z)\up (y\dow z)
\end{xalignat*} are valid for every $x,y,z\in B$. 
\end{enumerate} 
\end{definition}

In a biquandle $(B,\dow ,\up )$, we denote $x\dow ^{-1}y=\alpha _{y}^{-1}(x)$ and $x\up ^{-1}y=\beta _{y}^{-1}(x)$. 

Observe that if $\beta _{y}=id$ for every $y\in B$, then $(B,\dow )$ is a quandle - thus biquandles are a generalization of quandles. We would like to describe the precise relationship between the two algebraic structures.

Some examples of biquandles are listed below:
\begin{itemize}
\item Let $G$ be a group. Define two binary operations on $G$ by $a\dow b=b^{-1}a^{-1}b$ and $a\up b=b^{-2}a$. Then $(G,\dow ,\up )$ is a biquandle, called the \textbf{Wada biquandle}. 
\item Define two operations $\dow $ and $\up $ on the set $\ZZ _{n}$ by $i\dow j=(s+1)j-i$ and $i\up j=si$ for some chosen element $s\in \ZZ _{n}^{*}$. Then $B_{n}=(\ZZ _{n},\dow ,\up )$ is a biquandle, called the \textbf{dihedral biquandle}. 
\item Denote $\Lambda =\ZZ [t^{\pm 1},s^{\pm 1}]$ and let $M$ be a $\Lambda $-module, then $x\dow y=tx+(s-t)y$ and $x\up y=sx$ define a biquandle $(M,\dow ,\up )$, that is called an \textbf{Alexander biquandle}. 
\end{itemize}

For convenience of more topologically oriented readers, we briefly recall the knot-theoretical background of (bi)quandles. Let $D_{L}$ be an oriented link diagram of a link $L$. Figure \ref{fig:crossingQ} depicts the \textbf{quandle crossing relation} at a crossing of the diagram $D_{L}$.

\begin{figure}[h]
\labellist
\normalsize \hair 2pt
\pinlabel $x$ at 10 150
\pinlabel $y$ at 10 40
\pinlabel $x*y$ at 200 40
\endlabellist
\begin{center}
\includegraphics[scale=0.30]{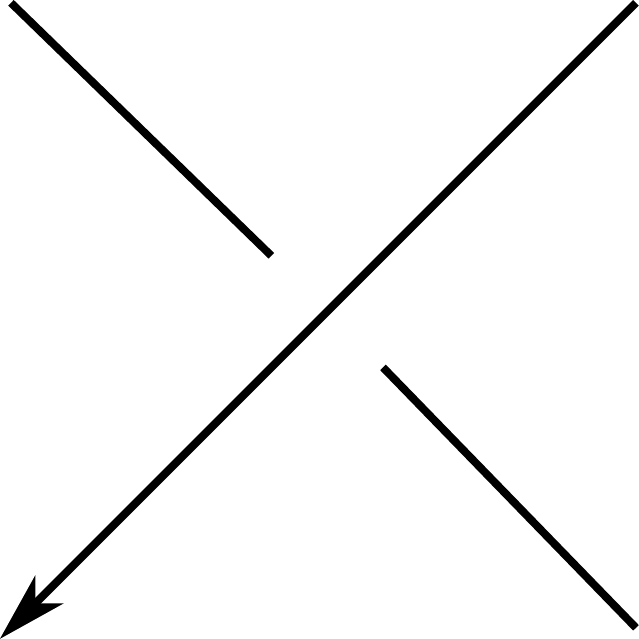}
\caption{The quandle crossing relation}
\label{fig:crossingQ}
\end{center}
\end{figure}

The \textbf{fundamental quandle} of the link $L$ is a quandle, whose generators are the arcs of the diagram and whose relations are the quandle crossing relations. Since the quandle axioms correspond to the Reidemeister moves of a link diagram, the fundamental quandle is a well-defined link invariant. For more details, see \cite{FR}. 

Considering a link diagram $D_{L}$ as a 4-valent graph, every arc becomes divided into two semiarcs. Figure \ref{fig:crossingB} depicts the \textbf{biquandle crossing relations} at a (positive or negative) crossing of the diagram $D_{L}$. The \textbf{fundamental biquandle} of the link $L$ is the biquandle, whose generators are the semiarcs of $D_{L}$, and whose relations are the biquandle crossing relations. Again, the fact that the biquandle axioms correspond to the Reidemester moves ensures that the fundamental biquandle defines a link invariant.

\begin{figure}[h]
\labellist
\normalsize \hair 2pt
\pinlabel $x$ at 10 150
\pinlabel $y$ at 10 40
\pinlabel $x\dow y$ at 190 40
\pinlabel $y\up x$ at 190 150
\pinlabel $y$ at 370 150
\pinlabel $x$ at 370 40
\pinlabel $y\up x$ at 560 40
\pinlabel $x\dow y$ at 560 150
\endlabellist
\begin{center}
\includegraphics[scale=0.30]{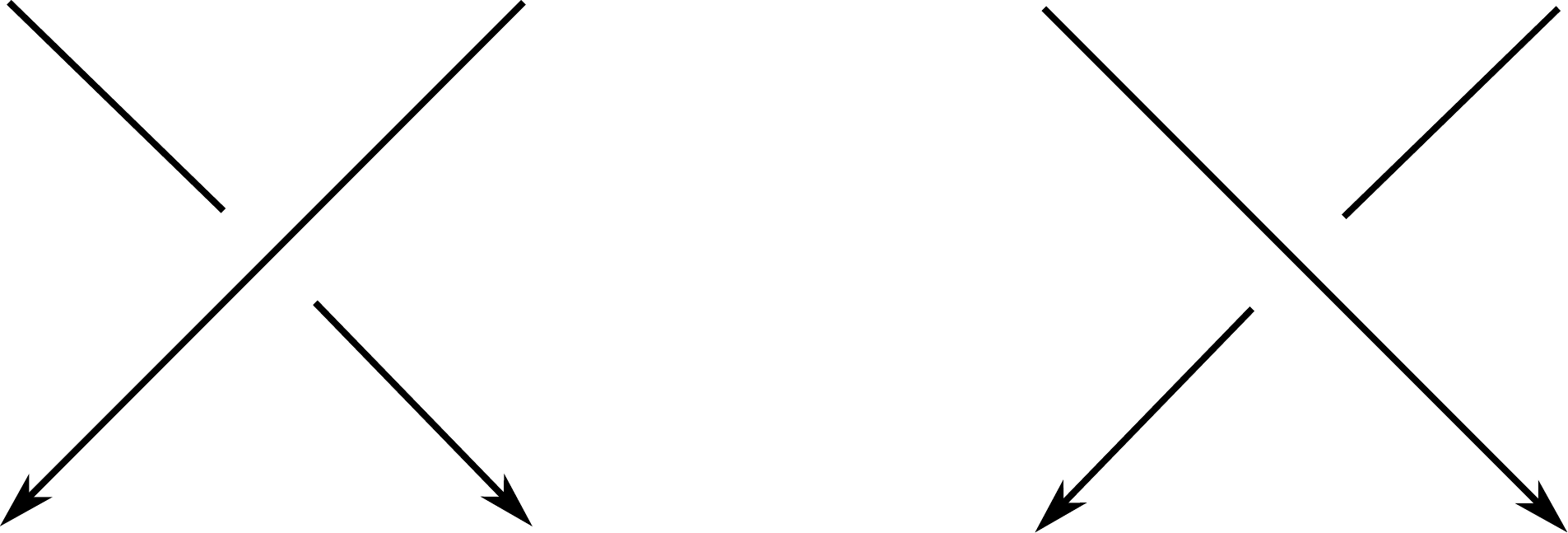}
\caption{Biquandle crossing relations}
\label{fig:crossingB}
\end{center}
\end{figure}

\section{Constructing biquandles from a quandle}
\label{sec2}
 It is known that to any biquandle $B=(X,\dow ,\up )$ we may associate a quandle $\mathcal{Q}(B)=(X,*)$, whose operation is given by $$x*y=(x\dow y)\up ^{-1}y$$ for every $x,y\in X$. In fact, $\mathcal{Q}$ defines a functor from the category of biquandles to the quandle category \cite{ASH}. We reprove this fact in the following Lemma. 

\begin{lemma}\label{lemma1} $\mathcal{Q}$ is a functor from the category of biquandles to the quandle category. 
\end{lemma}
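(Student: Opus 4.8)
The plan is to verify the two defining properties of a functor separately: first that $\mathcal{Q}$ sends each biquandle to a genuine quandle (the action on objects), and then that it carries biquandle homomorphisms to quandle homomorphisms while respecting identities and composition (the action on morphisms).

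For the action on objects, fix a biquandle $B=(X,\dow,\up)$ and set $x*y=(x\dow y)\up^{-1}y=\beta_y^{-1}\alpha_y(x)$. I would check the three quandle axioms of Definition \ref{def1} in turn. Axiom (1) is immediate: using the biquandle identity $x\dow x=x\up x$, we get $x*x=(x\dow x)\up^{-1}x=(x\up x)\up^{-1}x=\beta_x^{-1}\beta_x(x)=x$. Axiom (2) is also quick, since $S_y=\beta_y^{-1}\circ\alpha_y$ is a composite of the bijections $\alpha_y$ and $\beta_y^{-1}$ and hence itself a bijection.

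The main obstacle is axiom (3), the self-distributive law $(x*y)*z=(x*z)*(y*z)$. Here I would expand both sides in terms of $\dow$, $\up$ and their inverse operations and reduce the identity to the three exchange laws of Definition \ref{def2}. Concretely, I expect to rewrite each occurrence of $*$ using $u*v=(u\dow v)\up^{-1}v$, push the $\up^{-1}$'s through the $\dow$-terms, and then match the resulting expressions via the first exchange law $(x\dow y)\dow(z\dow y)=(x\dow z)\dow(y\up z)$ for the $\dow$-part, using the remaining two exchange laws to account for the $\up$-corrections. This is the one genuinely computational step, and the care lies in handling the inverse operations $\up^{-1}$ correctly, since the exchange laws are stated for $\dow$ and $\up$ rather than for their inverses; I would therefore first record the ``inverse forms'' of the exchange laws, obtained by applying the appropriate $\alpha$'s and $\beta$'s to both sides, and only then assemble the identity.

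For the action on morphisms, let $f\colon B_1\to B_2$ be a biquandle homomorphism, so that $f(x\dow y)=f(x)\dow f(y)$ and $f(x\up y)=f(x)\up f(y)$. The second identity gives $f\circ\beta_y=\beta_{f(y)}\circ f$, from which a short argument (writing $u=\beta_y^{-1}(x)$, so that $\beta_y(u)=x$) yields $f(x\up^{-1}y)=f(x)\up^{-1}f(y)$. Combining this with the first identity gives $f(x*y)=f\bigl((x\dow y)\up^{-1}y\bigr)=\bigl(f(x)\dow f(y)\bigr)\up^{-1}f(y)=f(x)*f(y)$, so $f$ is a quandle homomorphism $\mathcal{Q}(B_1)\to\mathcal{Q}(B_2)$. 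Finally, since $\mathcal{Q}$ leaves the underlying set and the underlying map unchanged, it manifestly sends identity morphisms to identity morphisms and preserves composition, which completes the verification that $\mathcal{Q}$ is a functor.
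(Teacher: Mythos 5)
Your handling of quandle axioms (1) and (2), and of the morphism part, is correct and complete; on morphisms you are in fact more careful than the paper, which silently uses $F(x\up ^{-1}y)=F(x)\up ^{-1}F(y)$, whereas your argument via $f\circ \beta _{y}=\beta _{f(y)}\circ f$ supplies exactly that justification. The gap is in axiom (3), which is the real content of the lemma: you describe a strategy (expand each $*$ as $(u\dow v)\up ^{-1}v$, record ``inverse forms'' of the exchange laws, then match terms) but never execute it, and the execution is not routine symbol-pushing. After expansion, the left side $(x*y)*z$ carries an outer $\up ^{-1}z$ but also an inner $\up ^{-1}y$, while the right side $(x*z)*(y*z)$ carries an outer $\up ^{-1}(y*z)$ whose subscript is itself a $*$-expression; no single application of a $\beta $ clears these simultaneously, and your sketch does not say which inverse identities are needed or in what order. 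As it stands, the crux of the proof is a promissory note rather than an argument.

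For comparison, the paper's proof avoids inverse operations entirely. It names $b=x*z$, $c=y*z$, $w=x*y$, records the defining relations $b\up z=x\dow z$, $c\up z=y\dow z$, $w\up y=x\dow y$ and $((x*y)*z)\up z=w\dow z$, and then verifies by two short computations (the third and second exchange laws for one side, the second and first for the other) that
\[
\bigl(((x*z)*(y*z))\up z\bigr)\up (y\dow z)=(x\dow z)\dow (y\dow z)=\bigl(((x*y)*z)\up z\bigr)\up (y\dow z)\;,
\]
after which injectivity of $\beta _{y\dow z}$ and $\beta _{z}$ cancels the two applications of $\up $ and yields $(x*z)*(y*z)=(x*y)*z$. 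If you want to complete your proposal, this is the efficient way to organize it: rather than inverting, apply suitable $\beta $'s to both candidate sides until every occurrence of $*$ can be eliminated through the relation $(u*v)\up v=u\dow v$, invoke the exchange laws only on inverse-free expressions, and finish by cancelling the injective maps.
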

\begin{proof} 
\begin{enumerate}
\item Let $B=(X,\dow ,\up )$ be a biquandle. Biquandle axiom (1) gives the equality $x\dow x=x\up x$, which implies $x*x=x\dow x\up ^{-1}x=x$ for any $x\in X$. Secondly, the maps $\alpha _{y}(x)=x\dow y$ and $\beta _{y}(x)=x\up y$ are bijections, therefore the map $S_{y}\colon \mathcal{Q}(X)\to \mathcal{Q}(X)$, given by $S_{y}(x)=x*y=(\beta _{y}^{-1}\circ \alpha _{y})(x)$, is also a bijection.

To show the validity of the third quandle axiom, we choose $x,y,z\in X$. Denote $b=x*z$, $c=y*z$ and $w=x*y$, which implies $b\up z=x\dow z$, $c\up z=y\dow z$, $w\up y=x\dow y$ and $((x*y)*z)\up z=w\dow z$. We use the third biquandle axiom for $B$ to compute:
\begin{xalignat*}{1}
& \left (((x*z)*(y*z))\up z\right )\up (y\dow z)=\left ((x*z)*(y*z))\up z\right )\up (c\up z)=\\
& =\left ((x*z)*(y*z))\up c\right )\up (z\dow c)=(b\dow c)\up (z\dow c)=(b\up z)\dow (c\up z)=\\
& =(x\dow z)\dow (y\dow z)\\
& \left (((x*y)*z)\up z\right )\up (y\dow z)=(w\dow z)\up (y\dow z)=(w\up y)\dow (z\up y)=\\
& =(x\dow y)\dow (z\up y)=(x\dow z)\dow (y\dow z)\;,
\end{xalignat*} and it follows by the second biquandle axiom that $(x*z)*(y*z)=(x*y)*z$. Therefore $\mathcal{Q}(B)=(X,*)$ is a quandle. 
\item Let $F\colon B_1\to B_2$ be a biquandle homomorphism from a biquandle $B_1=(X,\dow ,\up )$ to another biquandle $B_2=(Y,\veebar ,\barwedge)$. Denote by $\mathcal{Q}(F)\colon \mathcal{Q}(B_{1})\to \mathcal{Q}(B_{2})$ the induced map between the underlying quandles. Then the equalities $$\mathcal{Q}(F)(x*y)=F(x)\dow F(y)\up ^{-1}F(y)=\mathcal{Q}(F)(x)*\mathcal{Q}(F)(y)$$ imply that $\mathcal{Q}(F)$ is a quandle homomorphism. Similarly, for $B_1=B_2$ and $F=id_{B_1}$ we obtain that $\mathcal{Q}(id_{B_1})=id_{\mathcal{Q}(B_1)}$. Also, if $F\colon B_1\to B_2$ and $G\colon B_2\to B_3$ are biquandle homomorphisms, then $\mathcal{Q}(G\circ F)=\mathcal{Q}(G)\circ \mathcal{Q}(F)$. 
\end{enumerate} 
\end{proof}

Thus, every biquandle determines a quandle. This fact rises some questions, like: \begin{itemize}
\item Starting from a quandle $Q$, how do we construct a (nontrivial) biquandle $B$ with $\mathcal{Q}(B)=Q$?
\item Is it possible to characterize all biquandles $B$ with $\mathcal{Q}(B)=Q$?
\item What is the relationship between all biquandles $B$ with $\mathcal{Q}(B)=Q$? When are two of those biquandles isomorphic?
\end{itemize}  

Our results in the remainder of the Section will answer all these questions. We begin by defining a biquandle structure on a given quandle.   
\begin{definition}\label{def3} Let $Q=(X,*)$ be a quandle. A \textbf{biquandle structure} on $Q$ is a family of automorphisms $\{\beta _{y}\colon X\to X|\, y\in X\}\subset Aut(Q)$ that satisfies the following conditions:
\begin{enumerate}
\item $\beta _{\beta _{y}(x*y)}\beta _{y}=\beta _{\beta _{x}(y)}\beta _{x}$ for every $x,y\in X$,
\item the map $(y\mapsto \beta _{y}(y))$ is a bijection of $X$.
\end{enumerate}
\end{definition}
Observe that any automorphism $f$ of a quandle $Q$ defines a biquandle structure on $Q$ by $\beta _{y}=f$ for every $y\in Q$. We call this a \textbf{constant} biquandle structure on $Q$. 
 
\begin{theorem}\label {th1} Let $\{\beta _{y}\colon X\to X|\, y\in X\}$ be a biquandle structure on a quandle $Q=(X,*)$. Define two binary operations on $X$ by $x\dow y=\beta _{y}(x*y)\textrm{ and }x\up y=\beta _{y}(x)$ for every $x,y\in X$. Then $B=(X,\dow ,\up )$ is a biquandle and $\mathcal{Q}(B)=Q$. 
\end{theorem}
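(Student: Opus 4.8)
The task is to verify that the two operations $x\dow y=\beta_y(x*y)$ and $x\up y=\beta_y(x)$ satisfy the three biquandle axioms of Definition~\ref{def2}, and that the associated quandle $\mathcal{Q}(B)$ recovers $Q$. The plan is to treat these four verifications in turn, leveraging the two conditions in Definition~\ref{def3} (the ``compatibility'' relation and the bijectivity of $y\mapsto\beta_y(y)$) together with the quandle axioms for $(X,*)$.

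First I would check that $\mathcal{Q}(B)=Q$, since this both warms up the computation and clarifies why the formulas were chosen. By definition $x\up^{-1}y=\beta_y^{-1}(x)$, so $(x\dow y)\up^{-1}y=\beta_y^{-1}\bigl(\beta_y(x*y)\bigr)=x*y$; thus the induced quandle operation is exactly $*$. Next I would dispose of biquandle axiom~(1): using quandle axiom~(1) that $x*x=x$, I get $x\dow x=\beta_x(x*x)=\beta_x(x)=x\up x$, so the two agree. For biquandle axiom~(2), the maps $\alpha_y(x)=\beta_y(x*y)=\beta_y(S_y(x))$ and $\beta_y(x)=\beta_y(x)$ are bijections because each $\beta_y\in\mathrm{Aut}(Q)$ and each $S_y$ is a quandle symmetry (hence bijective) by Definition~\ref{def1}; the bijectivity of $S(x,y)=(y\up x,\,x\dow y)=(\beta_x(y),\,\beta_y(x*y))$ is precisely where condition~(2) of Definition~\ref{def3} — the bijectivity of $y\mapsto\beta_y(y)$ — is needed, since on the diagonal this map reduces to $(y,y)\mapsto(\beta_y(y),\beta_y(y))$ and off-diagonal invertibility must be reconstructed from the component maps.

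The substance of the proof lies in the three exchange laws of biquandle axiom~(3). Each one expands, via the definitions of $\dow$ and $\up$, into an identity among iterated applications of the automorphisms $\beta_{(-)}$ and the quandle operation $*$. I would rewrite the left and right sides of each exchange law by repeatedly substituting $x\dow y=\beta_y(x*y)$ and $x\up y=\beta_y(x)$, then push all the $\beta$'s to the outside using the fact that they are quandle homomorphisms (so $\beta(a*b)=\beta(a)*\beta(b)$). After this reduction, the equality of the two sides should follow from quandle axiom~(3) for $(X,*)$ together with condition~(1) of Definition~\ref{def3}, which is exactly the relation $\beta_{\beta_y(x*y)}\beta_y=\beta_{\beta_x(y)}\beta_x$ needed to match the nested subscript automorphisms appearing on either side. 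I expect the main obstacle to be bookkeeping: after expanding $x\dow y$ the subscript of an outer $\beta$ becomes itself a $\dow$- or $\up$-expression (e.g.\ $\beta_{z\dow y}=\beta_{\beta_y(z*y)}$), so the subscripts must be tracked carefully to see that condition~(1) applies in precisely the right form. Handling the second and third exchange laws — which mix $\dow$ and $\up$ — will require the analogous but distinct identities obtained by specializing condition~(1), and possibly an inverse form of it; verifying that a single compatibility condition suffices for all three laws is the delicate point I would scrutinize most closely.
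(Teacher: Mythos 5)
Your outline of axioms (1), (3), and of the identity $\mathcal{Q}(B)=Q$ matches the paper's proof: for the exchange laws one indeed expands both sides, uses that each $\beta _{y}$ is a quandle automorphism together with quandle axiom (3) (so that, e.g., $(x*y)*(z*y)=(x*z)*y$), and then all three laws follow from condition (1) of Definition \ref{def3} alone, applied with three different substitutions of the variables. No ``inverse form'' of the condition is needed, so the point you flagged as the delicate one resolves itself routinely.

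The genuine gap is the bijectivity of $S(x,y)=(\beta _{x}(y),\beta _{y}(x*y))$, which you dispose of with the phrase that ``off-diagonal invertibility must be reconstructed from the component maps.'' That is not an argument, and this step is the real content of axiom (2); moreover it requires \emph{both} conditions of Definition \ref{def3}, not only condition (2) as your plan suggests. The paper's argument runs as follows: given a target $(z,w)\in X\times X$, condition (2) produces a unique $y$ with $\beta _{y}(y)=\beta _{w}^{-1}\beta _{z}(z)$; then, since $\beta _{y}$ and the symmetry $S_{y}$ are bijections, there is a unique $x$ with $\beta _{y}(x*y)=w$. For this pair one has $S(x,y)=(\beta _{x}(y),w)$, and it remains to prove $\beta _{x}(y)=z$: condition (1) gives $\beta _{\beta _{x}(y)}\beta _{x}=\beta _{\beta _{y}(x*y)}\beta _{y}=\beta _{w}\beta _{y}$, hence $\beta _{\beta _{x}(y)}\left (\beta _{x}(y)\right )=\beta _{w}\beta _{y}(y)=\beta _{z}(z)$, and the injectivity of $u\mapsto \beta _{u}(u)$ (condition (2) again) forces $\beta _{x}(y)=z$. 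The same equations show that any preimage of $(z,w)$ must be exactly this pair $(x,y)$, which yields injectivity of $S$. Without this interplay between conditions (1) and (2), your plan cannot close axiom (2), so as it stands the proposal is incomplete at its hardest point.
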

\begin{proof} Since $x\dow x=\beta _{x}(x)=x\up x$ for every $x\in X$, the first biquandle axiom is valid.

To verify the second biquandle axiom, observe that since $Q$ is a quandle and $\beta _{y}$ is a bijection, the maps $\alpha _{y}(x)=\beta _{y}(x*y)$ and $\beta _{y}(x)=x\up y$ are bijections for every $y\in X$. It remains to show that the map $S\colon X\times X\to X\times X$, given by $S(x,y)=(y\up x,x\dow y)=(\beta _{x}(y),\beta _{y}(x*y))$, is a bijection. Choose  $(z,w)\in X\times X$. By property (2) from Definition \ref{def3}, there exists a unique $y\in X$ such that $\beta _{w}^{-1}\beta _{z}(z)=\beta _{y}(y)$. There exists a unique $x\in X$ such that $\beta _{y}(x)*\beta _{y}(y)=w$. We have $S(x,y)=(\beta _{x}(y),\beta _{y}(x*y))=(\beta _{x}(y),w)$ and use property (1) in Definition \ref{def3} to calculate
\begin{xalignat*}{1}
& \beta _{\beta _{y}(x*y)}^{-1}\beta _{z}(z)=\beta _{w}^{-1}\beta _{z}(z)=\beta _{y}(y)=\beta _{y}\beta _{x}^{-1}\beta _{x}(y)\\
& \beta _{z}(z)=\beta _{\beta _{y}(x*y)}\beta _{y}\beta _{x}^{-1}\beta _{x}(y)=\beta _{\beta _{x}(y)}\left (\beta _{x}(y)\right )\;,
\end{xalignat*} and property (2) from Definition \ref{def3} implies that $\beta _{x}(y)=z$.

To check the third biquandle axiom, we choose any $x,y,z\in X$ and compute
\begin{xalignat*}{1}
& (x\dow y)\dow (z\dow y)=\beta _{y}(x*y)\dow \beta _{y}(z*y)=\beta _{\beta _{y}(z*y)}(\beta _{y}((x*z)*y))\\
& (x\dow z)\dow (y\up z)=\beta _{z}(x*z)\dow \beta _{z}(y)=\beta _{\beta _{z}(y)}(\beta _{z}((x*z)*y)))\\
& (x\up y)\up (z\up y)=\beta _{y}(x)\up \beta _{y}(z)=\beta _{\beta _{y}(z)}(\beta _{y}(x))\\
& (x\up z)\up (y\dow z)=\beta _{z}(x)\up \beta _{z}(y*z)=\beta _{\beta _{z}(y*z)}(\beta _{z}(x))\\
& (x\dow y)\up (z\dow y)=\beta _{y}(x*y)\up \beta _{y}(z*y)=\beta _{\beta _{y}(z*y)}(\beta _{y}(x*y))\\
&(x\up z)\dow (y\up z) =\beta _{z}(x)\dow \beta _{z}(y)=\beta _{\beta _{z}(y)}(\beta _{z}(x*y))
\end{xalignat*} By condition (1) of Definition \ref{def3}, it follows that the third biquandle axiom is valid. We have shown that $B$ is a biquandle and the equality $x\dow y\up ^{-1}y=\beta _{y}^{-1}(x\dow y)=x*y$ implies that $\mathcal{Q}(B)=Q$. 
\end{proof}

\begin{example}[Wada biquandle] \label{ex3} Let $G$ be a group. It is easy to check that $a*b=ba^{-1}b$ defines a quandle operation on $G$. For every $y\in G$, define a map $\beta _{y}\colon G\to G$ by $\beta _{y}(a)=y^{-2}a$. Since $G$ is a group, $\beta _{y}$ is bijective, and 
\begin{xalignat*}{1}
& \beta _{y}(a)*\beta _{y}(b)=(y^{-2}a)*y^{-2}b=y^{-2}ba^{-1}b=\beta _{y}(a*b)
\end{xalignat*} implies that $\beta _{y}$ is a quandle automorphism of $(G,*)$. The map $(y\mapsto \beta _{y}(y))$ is a bijection since $\beta _{y}(y)=y^{-1}$. We have 
\begin{xalignat*}{1}
& \beta _{\beta _{y}(x*y)}\beta _{y}(a)=(y^{-1}x^{-1}y)^{-2}y^{-2}a=y^{-1}x^{2}y^{-1}a\\
& \beta _{\beta _{x}(y)}\beta _{x}(a)=(x^{-2}y)^{-2}x^{-2}a=y^{-1}x^{2}y^{-1}a
\end{xalignat*} for every $x,y,a\in G$, so the family of automorphisms $\{\beta _{y}|\, y\in G\}$ defines a biquandle structure on $(G,*)$. By Theorem \ref{th1}, this structure defines a biquandle $(G,\dow ,\up )$ with operations $x\dow y=y^{-1}x^{-1}y$ and $x\up y=y^{-2}x$, which is exactly the Wada biquandle. 
\end{example}

Theorem \ref{th1} describes a construction of a biquandle from a given quandle $Q$. In the following Theorem, we show that every biquandle $B$ with $\mathcal{Q}(B)=Q$ is obtained by this construction, thus giving a complete classification of all such biquandles. 

\begin{theorem} \label{th2} Let $B=(X,\dow ,\up )$ be a biquandle and let $\mathcal{Q}(B)=(X,*)$ be its associated quandle. Then the family of mappings $\{\beta _{y}|\, y\in X\}$ is a biquandle structure on $\mathcal{Q}(B)$. 
\end{theorem}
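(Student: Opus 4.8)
The plan is to verify the three defining conditions of a biquandle structure (Definition \ref{def3}) for the family $\beta_y(x)=x\up y$ directly from the biquandle axioms, the whole argument resting on two translations that tie the operations together. First I would record that, by the very definition of the associated quandle, $x*y=(x\dow y)\up^{-1}y=\beta_y^{-1}(x\dow y)$, whence $\beta_y(x*y)=x\dow y$, and also that $\beta_x(y)=y\up x$. These two identities are exactly what let me rewrite each condition of Definition \ref{def3} as one of the exchange laws of Definition \ref{def2}.

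Next I would establish Definition \ref{def3}(1). Evaluating the operator equation $\beta_{\beta_y(x*y)}\beta_y=\beta_{\beta_x(y)}\beta_x$ on an arbitrary $a\in X$ and substituting $\beta_y(x*y)=x\dow y$ and $\beta_x(y)=y\up x$ turns it into $(a\up y)\up(x\dow y)=(a\up x)\up(y\up x)$. But specializing the variables $(x,y,z)$ of the third exchange law to $(a,x,y)$ yields $(a\up x)\up(y\up x)=(a\up y)\up(x\dow y)$, which is the same statement. So condition (1) is immediate once the bookkeeping is in place.

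Then I would check that each $\beta_y$ is a quandle automorphism of $\mathcal{Q}(B)$. Bijectivity is biquandle axiom (2). For the homomorphism property $\beta_y(a*b)=\beta_y(a)*\beta_y(b)$ I would expand the right side as $(a\up y)*(b\up y)=\beta_{b\up y}^{-1}\big((a\up y)\dow(b\up y)\big)$, and use the second exchange law with its variables $(x,y,z)$ specialized to $(a,b,y)$, namely $(a\dow b)\up(y\dow b)=(a\up y)\dow(b\up y)$, to rewrite $(a\up y)\dow(b\up y)=\beta_{y\dow b}(a\dow b)$. Since the left side equals $\beta_y\beta_b^{-1}(a\dow b)$ and, with $b,y$ fixed, $a\dow b=\alpha_b(a)$ ranges over all of $X$, the homomorphism property is equivalent to the operator identity $\beta_{y\dow b}\beta_b=\beta_{b\up y}\beta_y$, which is exactly condition (1) read with $(x,y)$ replaced by $(y,b)$. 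Thus the automorphism property and condition (1) are two faces of the same identity, so I would present condition (1) first to keep the argument non-circular.

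Finally I would treat Definition \ref{def3}(2), that $d\colon y\mapsto\beta_y(y)=y\up y$ is a bijection. Using $y\up y=y\dow y$ (biquandle axiom (1)) gives $S(y,y)=(y\up y,y\dow y)=(d(y),d(y))$, so injectivity of $d$ follows at once from injectivity of $S$. Surjectivity is the one delicate point: it amounts to showing that the bijection $S$ carries the diagonal of $X\times X$ onto the diagonal, which is automatic when $X$ is finite (an injective self-map of a finite set is onto) and otherwise must be extracted from the bijectivity of $S$. I expect this last step, rather than the exchange-law manipulations (which are essentially forced by the two translations above), to be the only place that requires genuine care.
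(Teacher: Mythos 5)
Your handling of condition (1) and of the automorphism property is correct and is essentially the paper's own argument: condition (1) is the third exchange law rewritten through the translations $x\dow y=\beta_y(x*y)$ and $y\up x=\beta_x(y)$, and the homomorphism property of each $\beta_y$ then follows from the second exchange law combined with the already-established condition (1) --- the paper proceeds in exactly this order, for exactly the non-circularity reason you give. Injectivity of $d\colon y\mapsto\beta_y(y)$ from $S(y,y)=(\beta_y(y),\beta_y(y))$ and injectivity of $S$ is also the paper's argument.

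The genuine gap is the surjectivity of $d$ for infinite $X$, which you flag as delicate but leave unproved, proposing only that it ``must be extracted from the bijectivity of $S$.'' It cannot be extracted from bijectivity of $S$ alone: for infinite $X$, a bijection $T$ of $X\times X$ can map the diagonal $\Delta$ injectively \emph{into} $\Delta$ without mapping it \emph{onto} $\Delta$. For instance, with $X=\mathbb{N}$, set $T(n,n)=(n+1,n+1)$ and let $T$ identify the complement $(X\times X)\setminus\Delta$ with $\bigl((X\times X)\setminus\Delta\bigr)\cup\{(0,0)\}$ by any bijection of these countable sets; such a $T$ satisfies every set-theoretic property you invoke, yet its diagonal map is not onto. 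The missing idea --- and the way the paper closes the proof --- is to combine surjectivity of $S$ with condition (1), which you have already proved: given $z\in X$, choose $(x,y)$ with $S(x,y)=(z,z)$, i.e.\ $\beta_x(y)=z=\beta_y(x*y)$; then condition (1) gives
$$\beta_z\beta_y=\beta_{\beta_y(x*y)}\beta_y=\beta_{\beta_x(y)}\beta_x=\beta_z\beta_x\;,$$
hence $\beta_x=\beta_y$ and therefore $\beta_y(y)=\beta_x(y)=z$, so $d$ is surjective. This algebraic input, not the bijectivity of $S$ by itself, is what makes the infinite case work, and it is the one step of your outline that does not go through as stated.
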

\begin{proof} Since $B$ is a biquandle, the mapping $\beta _{y}\colon X\to X$, given by $\beta _{y}(x)=x\up y$, is a bijection for every $y\in X$. Moreover, we have $x*y=(x\dow y)\up ^{-1}y=\beta _{y}^{-1}(x\dow y)$, which implies $x\dow y=\beta _{y}(x*y)$.   

It follows from the third biquandle axiom that $\beta _{\beta _{y}(z)}(\beta _{y}(x))=\beta _{y}(x)\up \beta _{y}(z)= (x\up y)\up (z\up y)=(x\up z)\up (y\dow z)=\beta _{z}(x)\up \beta _{z}(y*z)=\beta _{\beta _{z}(y*z)}(\beta _{z}(x))$, which implies the equality $\beta _{\beta _{y}(z)}\beta _{y}=\beta _{\beta _{z}(y*z)}\beta _{z}$ for every $y,z\in X$. The maps $\beta _{y}$ thus satisfy condition (1) from Definition \ref{def3} and we may compare
\begin{xalignat*}{1}
& (x\dow y)\up (z\dow y)=\beta _{y}(x*y)\up \beta _{y}(z*y)=\beta _{\beta _{y}(z*y)}(\beta _{y}(x*y))=\beta _{\beta _{z}(y)}(\beta _{z}(x*y))\textrm{ and }\\
& (x\up z)\dow (y\up z)=\beta _{z}(x)\dow \beta _{z}(y)=\beta _{\beta _{z}(y)}(\beta _{z}(x)*\beta _{z}(y))\;.
\end{xalignat*} By the third biquandle axiom, we have $(x\dow y)\up (z\dow y)=(x\up z)\dow (y\up z)$ and therefore $\beta _{z}(x*y)=\beta _{z}(x)*\beta _{z}(y)$ for every $x,y,z\in X$. We have shown that $\beta _{y}\in Aut(X,*)$ for every $y\in X$. 

It remains to prove condition (2) from Definition \ref{def3}. Since $B$ is a biquandle, the map $S\colon X\times X\to X\times X$, given by $S(x,y)=(y\up x,x\dow y)$, is a bijection. It follows that the restriction $S|_{\Delta }\colon \Delta \to \Delta $, given by $S(x,x)=(\beta _{x}(x),\beta _{x}(x))$, is injective, thus $(x\mapsto \beta _{x}(x))$ is injective. To show it is also surjective, choose any $z\in X$. Since $S$ is a bijection, there exist $x,y\in X$ such that $S(x,y)=(z,z)$. It follows that $z=\beta _{x}(y)=\beta _{y}(x*y)$ and by condition (1) we have $$\beta _{\beta _{y}(x*y)}\beta _{y}=\beta _{\beta _{x}(y)}\beta _{x}=\beta _{\beta _{y}(x*y)}\beta _{x}\;,$$ which implies $\beta _{x}=\beta _{y}$ and thus $\beta _{y}(y)=\beta _{x}(y)=z$.   
\end{proof}

\begin{example}[Alexander quandles and biquandles] \label{ex4} Let $\Lambda =\ZZ [t^{\pm 1},s^{\pm 1}]$. Consider an Alexander quandle as a $\Lambda $-module $M$, whose operation is given by $$x*y=(s^{-1}t)x+(1-s^{-1}t)y\;.$$ Taking a constant biquandle structure $\{\beta _{y}\, |\, \beta _{y}(x)=sx\textrm{ for every $x,y\in M$}\}$, we obtain the Alexander biquandle $(M,\dow ,\up )$ with operations $x\dow y=tx+(s-t)y$ and $x\up y=sx$ for every $x,y\in M$. 
\end{example}

In order to classify biquandles, we need to determine when two biquandle structures yield isomorphic biquandles. 

\begin{proposition} \label{prop2} Let $Q_1=(X_{1},*_{1})$ and $Q_2=(X_{2},*_{2})$ be two quandles. Then a biquandle defined by a biquandle structure $\{\beta _{y}^{1}\, |\, y\in X_{1}\}$ on $Q_{1}$ is isomorphic to a biquandle defined by a biquandle structure $\{\beta _{y}^{2}\, |\, y\in X_{2}\}$ on $Q_{2}$ if and only if there is a quandle isomorphism $F\colon Q_{1}\to Q_{2}$ such that for every $y\in X_{1}$, $$F\beta _{y}^{1}=\beta _{F(y)}^{2}F\;.$$
\end{proposition}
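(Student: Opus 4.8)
The plan is to prove both implications directly, exploiting the functoriality of $\mathcal{Q}$ from Lemma \ref{lemma1} together with the fact that, under the construction of Theorem \ref{th1}, the operation $\up$ literally encodes the automorphisms $\beta_y$ via $x\up y=\beta_y(x)$. Throughout, write $B_1$ and $B_2$ for the biquandles defined by the two structures, so that $\mathcal{Q}(B_1)=Q_1$ and $\mathcal{Q}(B_2)=Q_2$ by Theorem \ref{th1}.

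For the forward implication, suppose $F\colon B_1\to B_2$ is a biquandle isomorphism. Applying the functor $\mathcal{Q}$ and using Lemma \ref{lemma1}, I obtain a quandle isomorphism $\mathcal{Q}(F)\colon Q_1\to Q_2$, which coincides with $F$ as a map of underlying sets. Since $F$ preserves the operation $\up$, for all $x,y\in X_1$ we have $F(\beta_y^1(x))=F(x\up y)=F(x)\up F(y)=\beta_{F(y)}^2(F(x))$, which is exactly the asserted relation $F\beta_y^1=\beta_{F(y)}^2 F$.

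For the converse, suppose $F\colon Q_1\to Q_2$ is a quandle isomorphism satisfying $F\beta_y^1=\beta_{F(y)}^2 F$ for every $y\in X_1$. I would verify that this same set map $F$ is a biquandle homomorphism $B_1\to B_2$. Preservation of $\up$ is immediate from the commutation relation, exactly as above. For $\dow$, I compute $F(x\dow y)=F(\beta_y^1(x*_1 y))=\beta_{F(y)}^2(F(x*_1 y))=\beta_{F(y)}^2(F(x)*_2 F(y))=F(x)\dow F(y)$, where the second equality uses the commutation relation and the third uses that $F$ is a quandle homomorphism. Being a bijection that preserves both operations, $F$ is then a biquandle isomorphism.

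The argument is short because the commutation relation is precisely the statement that $F$ is compatible with $\up$; the only real content is the observation that compatibility with $\dow$ follows for free from this together with $F$ being a quandle homomorphism, which is where the specific form $x\dow y=\beta_y(x*y)$ of the construction is essential. The mildest point to watch is the identification, in the forward direction, of the underlying set map of a biquandle isomorphism with a quandle isomorphism of the associated quandles, which is supplied by the functoriality established in Lemma \ref{lemma1}.
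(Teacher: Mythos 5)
Your proof is correct and follows essentially the same route as the paper's: the forward direction extracts the quandle isomorphism via the functor $\mathcal{Q}$ of Lemma \ref{lemma1} and reads off the commutation relation from preservation of $\up$, and the converse verifies preservation of $\dow$ and $\up$ by the identical computation $F(x\dow y)=F(\beta_y^1(x*_1 y))=\beta_{F(y)}^2(F(x)*_2F(y))=F(x)\dow F(y)$. No gaps; this matches the paper's argument step for step.
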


\begin{proof} Denote by $B_ {i}=(X_i,\dow _{i},\up _{i})$ the biquandle, defined by the biquandle structure $\{\beta _{y}^{i}\, |\, y\in X_{i}\}$ on $Q_{i}$ for $i=1,2$. \\
$(\Rightarrow )\colon $ Suppose there exists a biquandle isomorphism $f\colon B_{1}\to B_{2}$. It follows from Lemma \ref{lemma1} that $f$ induces an isomorphism between the underlying quandles $(Q_{1},*_{1})$ and $(Q_{2},*_{2})$. Moreover, we have $f(\beta _{y}^{1}(x))=f(x\up _{1}y)=f(x)\up _{2}f(y)=\beta ^{2}_{f(y)}(f(x))$, which implies the equality $f\beta _{y}^{1}=\beta _{f(y)}^{2}f$ for every $y\in Q_{1}$. 

$(\Leftarrow )\colon $ Suppose that there is a quandle isomorphism $F\colon Q_{1}\to Q_{2}$ such that $F\beta _{y}^{1}=\beta _{F(y)}^{2}F$ for every $y\in X_{1}$. It follows that $$F(x\dow _{1}y)=F(\beta _{y}^{1}(x*_{1}y))=\beta _{F(y)}^{2}F(x*_{1}y)=\beta _{F(y)}^{2}(F(x)*_{2}F(y))=F(x)\dow _{2}F(y)$$ and $F(x\up _{1}y)=F(\beta _{y}^{1}(x))=\beta _{F(y)}^{2}(F(x))=F(x)\up _{2}F(y)$ for every $x,y\in X_{1}$, thus $F$ defines a biquandle isomorphism from $B_{1}$ to $B_{2}$. 
\end{proof}

As we have observed, every automorphism $f$ of a quandle $Q=(X,*)$ defines a constant biquandle structure $\{\beta _{y}=f|\, y\in X\}$ and thus defines a biquandle $X_{f}$ with $\mathcal{Q}(X_{f})=Q$. One would then like to know when two such biquandles are isomorphic. 

\begin{corollary} \label{prop1} Let $Q_1=(X,*_{1})$ and $Q_2=(Y,*_{2})$ be quandles and let $f\in Aut(Q_{1})$, $g\in Aut(Q_{2})$ be automorphisms that define biquandles $X_{f}$ and $Y_{g}$. The biquandles $X_{f}$ and $Y_{g}$ are isomorphic if and only if there exists a quandle isomorphism $F\colon Q_{1}\to Q_{2}$ such that $Ff=gF$. 
\end{corollary}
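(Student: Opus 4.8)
The plan is to recognize Corollary \ref{prop1} as the special case of Proposition \ref{prop2} in which both biquandle structures are constant. Specifically, I would take $\beta_y^1 = f$ for every $y \in X$ and $\beta_y^2 = g$ for every $y \in Y$, and then simply substitute these into the isomorphism criterion supplied by Proposition \ref{prop2}. The entire content of the corollary should fall out by unwinding that substitution, so the proof is meant to be short.

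First I would invoke Proposition \ref{prop2} directly: the biquandles $X_f$ and $Y_g$ are isomorphic if and only if there exists a quandle isomorphism $F \colon Q_1 \to Q_2$ satisfying $F\beta_y^1 = \beta_{F(y)}^2 F$ for every $y \in X$. Next I would observe that because the structures are constant, the left-hand side $F\beta_y^1$ equals $Ff$ for every $y$, and the right-hand side $\beta_{F(y)}^2 F$ equals $gF$ for every $y$ (since $\beta_{F(y)}^2 = g$ regardless of the argument $F(y)$). Thus the family of conditions indexed by $y \in X$ collapses into the single equation $Ff = gF$, which is exactly the criterion claimed in the corollary. Conversely, any $F$ satisfying $Ff = gF$ trivially satisfies the per-$y$ condition of Proposition \ref{prop2}, since each instance of that condition is literally $Ff = gF$.

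The only point requiring a sentence of care is the transition from the universally quantified condition ``for every $y$'' to the single equation: I would emphasize that constancy of both structures makes every instance of the condition identical and independent of $y$, so the quantifier becomes vacuous and the whole system reduces to one equation. There is essentially no obstacle here, as the corollary is a direct specialization; the main thing to get right is the bookkeeping that $\beta_{F(y)}^2 = g$ holds because the structure on $Q_2$ is constant and hence does not depend on which element $F(y)$ indexes it. I would close by noting that the resulting $F$ is the desired biquandle isomorphism, exactly as produced by the $(\Leftarrow)$ direction of Proposition \ref{prop2}.
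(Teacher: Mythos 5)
Your proposal is correct and matches the paper's proof exactly: the paper also derives this corollary as the immediate specialization of Proposition \ref{prop2} to constant biquandle structures $\beta_y^1=f$ and $\beta_y^2=g$, where the per-$y$ condition $F\beta_y^1=\beta_{F(y)}^2F$ collapses to $Ff=gF$. The paper simply states ``It follows directly from Proposition \ref{prop2}''; your write-up supplies the same reasoning with the bookkeeping made explicit.
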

\begin{proof} It follows directly from Proposition \ref{prop2}. 
\end{proof}

\begin{corollary}\label{cor1} The number of nonisomorphic constant biquandle structures on a quandle $Q$ is equal to the number of conjugacy classes of $Aut(Q)$.  
\end{corollary}

\section{Automorphism groups of biquandles}
\label{sec3}
In Section \ref{sec2}, we introduced biquandle structures and showed that every biquandle is given by a biquandle structure on its underlying quandle. Proposition \ref{prop2} determines when two biquandle structures are isomorphic. We may use this result to relate the automorphism group of a biquandle with the automorphism group of its underlying quandle. Given a group $G$ and a subset $S\subset G$, denote by $N_{G}(S)$ the normalizer of $S$ and by $C_{G}(S)$ the centralizer of $S$ in $G$.  

\begin{theorem}\label{th3} Let $B$ be a biquandle with $\mathcal{Q}(B)=Q$ that is given by a biquandle structure $\{\beta _{y}|\, y\in Q\}\subset Aut(Q)$. Then $$Aut(B)\leq N_{Aut(Q)}\left \{ \beta _{y}|\, y\in Q\right \}\;.$$ 
\end{theorem}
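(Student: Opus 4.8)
The plan is to use the characterization of biquandle isomorphisms from Proposition \ref{prop2} in the special case $Q_1=Q_2=Q$, where the two biquandle structures coincide. An element of $Aut(B)$ is a biquandle isomorphism $f\colon B\to B$; by Lemma \ref{lemma1} it induces a quandle automorphism $F=\mathcal{Q}(f)\in Aut(Q)$, and in fact since $f$ and $F$ agree as set maps (they are both the underlying map on $X$), I will simply regard each biquandle automorphism of $B$ as a quandle automorphism of $Q$. The content of the theorem is then that this quandle automorphism must normalize the set $\{\beta_y\mid y\in Q\}$.

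First I would take $f\in Aut(B)$ and apply the forward direction of Proposition \ref{prop2} with $Q_1=Q_2=Q$ and $\beta_y^1=\beta_y^2=\beta_y$. This gives directly the relation
\begin{equation*}
f\beta_y = \beta_{f(y)}f \qquad\text{for every } y\in Q,
\end{equation*}
which rearranges to $f\beta_y f^{-1}=\beta_{f(y)}$. Next I would observe that as $y$ ranges over all of $Q$, so does $f(y)$, since $f$ is a bijection of $Q$. Hence the map $\beta_y\mapsto f\beta_y f^{-1}=\beta_{f(y)}$ permutes the indexed family, giving the set equality
\begin{equation*}
f\left\{\beta_y\mid y\in Q\right\}f^{-1}=\left\{\beta_{f(y)}\mid y\in Q\right\}=\left\{\beta_y\mid y\in Q\right\}.
\end{equation*}
By definition of the normalizer, this says precisely that $f\in N_{Aut(Q)}\{\beta_y\mid y\in Q\}$, which proves the containment $Aut(B)\leq N_{Aut(Q)}\{\beta_y\mid y\in Q\}$.

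I expect this proof to be short and essentially an immediate corollary of Proposition \ref{prop2}; there is no serious obstacle in the inclusion direction. The one point that deserves care is the identification of $Aut(B)$ with a subset of $Aut(Q)$: I must justify that a biquandle automorphism $f$, viewed through the functor $\mathcal{Q}$, really is the same underlying bijection of $X$ acting as a quandle automorphism, so that the conjugation statement inside $Aut(Q)$ makes sense. This follows because the functor $\mathcal{Q}$ does not change the underlying set or the underlying map, only reinterprets the operations via $x*y=(x\dow y)\up^{-1}y$. The subtler issue — and the reason the statement is an inequality rather than an equality — is that not every quandle automorphism normalizing the family need lift to a biquandle automorphism; the converse direction of Proposition \ref{prop2} requires the stronger pointwise condition $F\beta_y F^{-1}=\beta_{F(y)}$, which is generally strictly stronger than merely permuting the set $\{\beta_y\}$. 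I would note this distinction explicitly, since it motivates the refined centralizer result of Corollary \ref{th4} in the constant case, where the two conditions coincide.
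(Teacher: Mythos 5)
Your proposal is correct and is essentially the paper's own proof: the paper's entire argument is the one-line instruction to specialize Proposition \ref{prop2} to $Q_1=Q_2=Q$ with $\beta_y^1=\beta_y^2=\beta_y$, and you carry out exactly that specialization, spelling out the relation $f\beta_y f^{-1}=\beta_{f(y)}$ and the resulting set equality. Your closing remarks --- identifying $Aut(B)$ inside $Aut(Q)$ via the functor $\mathcal{Q}$, and noting that the pointwise condition is strictly stronger than set normalization (hence the inequality rather than equality) --- are accurate and fill in details the paper leaves implicit.
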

\begin{proof} In Proposition \ref{prop2}, we take $Q_{1}=Q_{2}=Q$ and $\beta _{y}^{1}=\beta _{y}^{2}$ for every $y\in Q$. 
\end{proof}

In case of a constant biquandle structure, the biquandle automorphism group is completely determined by the quandle automorphism group: 
\begin{corollary}\label{th4} Let $X_{f}$ be a biquandle with $\mathcal{Q}(X_{f})=Q$ that is given by the constant biquandle structure $\{\beta _{y}=f|y\in Q\}$. Then $Aut(X_{f})\cong C_{Aut(Q)}(f)$. 
\end{corollary}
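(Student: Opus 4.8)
The plan is to derive Corollary \ref{th4} from Corollary \ref{prop1} by specializing to the case $Q_{1}=Q_{2}=Q$ and $f=g$. First I would recall that, by Corollary \ref{prop1}, a biquandle automorphism of $X_{f}$ is precisely a quandle isomorphism $F\colon Q\to Q$ (that is, $F\in Aut(Q)$) satisfying $Ff=fF$. The condition $Ff=fF$ says exactly that $F$ commutes with $f$, so the set of such $F$ is the centralizer $C_{Aut(Q)}(f)$. This establishes a bijection between the underlying sets of $Aut(X_{f})$ and $C_{Aut(Q)}(f)$.

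The remaining work is to check that this bijection is a group isomorphism rather than a mere set bijection. Let $\Phi\colon Aut(X_{f})\to C_{Aut(Q)}(f)$ be the map sending a biquandle automorphism $f\colon X_{f}\to X_{f}$ to its induced quandle automorphism $\mathcal{Q}(f)\in Aut(Q)$; by Lemma \ref{lemma1}, the functor $\mathcal{Q}$ respects composition and identities, so $\Phi$ is a group homomorphism. The forward implication of Corollary \ref{prop1} shows that $\Phi$ indeed lands in $C_{Aut(Q)}(f)$, and the converse implication shows that every element of $C_{Aut(Q)}(f)$ arises this way, giving surjectivity. For injectivity, I would observe that a biquandle automorphism of $X_{f}$ and its underlying quandle automorphism have the same underlying set map, so $\Phi$ is injective on the nose; equivalently, the operations $\dow$ and $\up$ of $X_{f}$ are recovered from the quandle operation $*$ of $Q$ together with the fixed automorphism $f$ via $x\dow y=f(x*y)$ and $x\up y=f(x)$, so a set map preserving $*$ and commuting with $f$ automatically preserves $\dow$ and $\up$.

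I do not anticipate any genuine obstacle here, since the statement is a direct corollary of Proposition \ref{prop2}/Corollary \ref{prop1}. The only point requiring mild care is the bookkeeping distinguishing a biquandle automorphism from its image under $\mathcal{Q}$: one must verify that $\Phi$ is a well-defined homomorphism (immediate from functoriality of $\mathcal{Q}$) and that its image is exactly $C_{Aut(Q)}(f)$ rather than some larger or smaller subgroup. Thus the proof reduces to assembling the two directions of Corollary \ref{prop1} into the statement that $\Phi$ is a bijective homomorphism, hence a group isomorphism $Aut(X_{f})\cong C_{Aut(Q)}(f)$.
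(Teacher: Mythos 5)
Your proposal is correct and follows essentially the same route as the paper, which simply specializes Proposition \ref{prop2} (of which Corollary \ref{prop1} is the immediate restatement) to $Q_{1}=Q_{2}=Q$ with the constant structure $\beta_{y}=f$ to get $F\in Aut(X_{f})\Leftrightarrow Ff=fF$. Your additional verification that the identification $Aut(X_{f})\to C_{Aut(Q)}(f)$ is a group isomorphism (injectivity because $\mathcal{Q}$ preserves the underlying set map, compatibility with composition by functoriality) only makes explicit what the paper leaves implicit, and is a sound piece of bookkeeping rather than a different argument.
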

\begin{proof} If $F\in Aut(Q)$ is a quandle automorphism, then it follows from Proposition \ref{prop2} that $$F\in Aut(X_{f})\quad \Leftrightarrow \quad Ff=f F\;.$$
\end{proof}

The automorphism group of Alexander quandles was determined in \cite{HOU}. Using this result together with Corollary \ref{th4}, we may obtain the automorphism group of any Alexander biquandle. 

\begin{corollary}\label{cor2} Let $M$ be an Alexander biquandle with the corresponding Alexander quandle $\mathcal{Q}(M)$, as defined in Example \ref{ex4}. Then $$Aut(M)\cong C_{Aut(\mathcal{Q}(M))}(s)\;.$$ 
\end{corollary}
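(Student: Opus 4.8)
The plan is to recognize the Alexander biquandle $M$ as the biquandle arising from a \emph{constant} biquandle structure on its underlying Alexander quandle $\mathcal{Q}(M)$, and then to invoke Corollary \ref{th4} directly. By Example \ref{ex4}, the operations $x \dow y = tx + (s-t)y$ and $x \up y = sx$ are precisely those produced by Theorem \ref{th1} from the constant family $\{\beta_y = f \mid y \in M\}$, where $f \colon M \to M$ is the map $f(x) = sx$. Thus $M = X_f$ in the notation of Corollary \ref{th4}, with $Q = \mathcal{Q}(M)$ carrying the operation $x * y = (s^{-1}t)x + (1 - s^{-1}t)y$.

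First I would confirm that $f$ is a genuine element of $Aut(\mathcal{Q}(M))$, so that the constant biquandle structure is legitimate and the centralizer $C_{Aut(\mathcal{Q}(M))}(f)$ is meaningful. Since $s \in \Lambda^{*}$, multiplication by $s$ is a bijection of the module $M$, and a one-line computation (already implicit in Example \ref{ex4}) gives $f(x * y) = tx + (s-t)y = f(x) * f(y)$; hence $f \in Aut(\mathcal{Q}(M))$. With this in hand, Corollary \ref{th4} applied to $Q = \mathcal{Q}(M)$ and the constant structure $\beta_y = f$ yields at once $Aut(M) \cong C_{Aut(\mathcal{Q}(M))}(f)$.

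The final step is purely notational: I would adopt the convention that the symbol $s$ appearing in the statement denotes the automorphism $f$, namely multiplication by $s$, so that $C_{Aut(\mathcal{Q}(M))}(f) = C_{Aut(\mathcal{Q}(M))}(s)$, which is the desired isomorphism. I do not anticipate any substantive obstacle, as the result is an immediate specialization of Corollary \ref{th4}. The only points that demand care are the bookkeeping identification of $M$ with $X_f$ for the specific choice $f(x) = sx$, and the clarification that in the centralizer $C_{Aut(\mathcal{Q}(M))}(s)$ the letter $s$ is shorthand for the quandle automorphism given by multiplication by $s$ — an element lying in $Aut(\mathcal{Q}(M))$ precisely because $s$ is a unit of $\Lambda$.
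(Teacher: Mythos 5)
Your proof is correct and takes precisely the route the paper intends: the paper states Corollary \ref{cor2} without a separate argument, as an immediate consequence of Corollary \ref{th4} applied to the constant biquandle structure $\{\beta_y(x)=sx\}$ on $\mathcal{Q}(M)$ exhibited in Example \ref{ex4}. Your added checks --- that multiplication by $s$ is indeed a quandle automorphism of $(M,*)$ with $x*y=(s^{-1}t)x+(1-s^{-1}t)y$, and that the $s$ in $C_{Aut(\mathcal{Q}(M))}(s)$ denotes this automorphism --- are exactly the bookkeeping the paper leaves implicit.
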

 Similarly, we obtain a classification of Alexander biquandles as follows. 

\begin{proposition} Alexander biquandles $M$ and $N$ are isomorphic if and only if there exists an isomorphism of Alexander quandles $F\colon \mathcal{Q}(M)\to \mathcal{Q}(N)$ such that $F(sx)=sF(x)$ for every $x\in M$. 
\end{proposition}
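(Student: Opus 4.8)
The plan is to observe that an Alexander biquandle is nothing but a biquandle attached to a \emph{constant} biquandle structure, and then to read off the claim from Corollary \ref{prop1}. By Example \ref{ex4}, the Alexander biquandle $M$ over $\Lambda = \ZZ[t^{\pm 1},s^{\pm 1}]$ is exactly the biquandle $X_{f}$ determined by the constant structure $\{\beta_{y}=f \mid y\in M\}$ on its underlying Alexander quandle $\mathcal{Q}(M)$, where $f\in Aut(\mathcal{Q}(M))$ is multiplication by $s$, i.e. $f(x)=sx$. In the same way $N=Y_{g}$, with $g$ being multiplication by $s$ on $\mathcal{Q}(N)$. Since $M$ and $N$ are Alexander biquandles over the same ring, $f$ and $g$ are both the ``multiply by $s$'' automorphism, each on its own module.

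With these identifications I would apply Corollary \ref{prop1} with $Q_{1}=\mathcal{Q}(M)$, $f$ as above, $Q_{2}=\mathcal{Q}(N)$, and $g$ as above. The corollary gives that $X_{f}\cong Y_{g}$ as biquandles if and only if there is a quandle isomorphism $F\colon \mathcal{Q}(M)\to \mathcal{Q}(N)$ with $Ff=gF$. Substituting $f(x)=sx$ and $g(x)=sx$ turns the intertwining relation $Ff=gF$ into $F(sx)=sF(x)$ for every $x\in M$, which is precisely the condition in the statement; this yields both implications at once.

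The only step needing a word of justification is the matching of terminology: an isomorphism of the Alexander biquandles $M$ and $N$ means a biquandle isomorphism, and an isomorphism of Alexander quandles $\mathcal{Q}(M)\to \mathcal{Q}(N)$ means a quandle isomorphism of the underlying quandles, so both coincide with the maps occurring in Corollary \ref{prop1}. I do not expect any genuine obstacle here; the proposition is a direct specialization of Corollary \ref{prop1} to the constant structure $\beta_{y}=(\text{multiplication by }s)$, and the entire argument reduces to invoking Example \ref{ex4} together with Corollary \ref{prop1}.
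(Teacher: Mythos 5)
Your proposal is correct and takes essentially the same route as the paper: the paper's proof is simply ``It follows from Proposition \ref{prop2},'' and your argument via Corollary \ref{prop1} is exactly that proposition specialized to the constant structures $\beta_{y}(x)=sx$ identified in Example \ref{ex4}. Your write-up is in fact slightly more explicit than the paper's, since it spells out the identification $M=X_{f}$, $N=Y_{g}$ and the substitution turning $Ff=gF$ into $F(sx)=sF(x)$.
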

\begin{proof} It follows from Proposition \ref{prop2}. 
\end{proof}

In the following, we study the automorphism group of dihedral biquandles. Recall the definition of the affine group of $\ZZ _{n}$: 
$$\aff (\ZZ _{n})=\{f_{a,b}\colon \ZZ _{n}\to \ZZ _{n}|\, f_{a,b}(i)=ai+b,\, a\in \ZZ _{n}^{*}, b\in \ZZ _{n}\}\;.$$
By \cite[Theorem 2.1]{EL1}, the automorphism group of a dihedral quandle $R_{n}$ is isomorphic to the affine group $\aff (\ZZ _{n})$. The reader may check that the underlying quandle of a dihedral biquandle $\mathcal{Q}(B_{n})$ is a quandle with operation $i*j=(1+s^{-1})j-s^{-1}i$, which is a generalization of the dihedral quandle. Nevertheless, we may show the following.

\begin{proposition} \label{prop3} Let $B_{n}$ be a dihedral biquandle for which $s+1\in \ZZ _{n}^{*}$. Then $Aut(B_{n})\cong C_{\aff (\ZZ _{n})}(s)$.
\end{proposition}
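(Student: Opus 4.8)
The plan is to apply Corollary \ref{th4} to the dihedral biquandle $B_n$, which is given by the constant biquandle structure $\beta_y(i)=si$ on the underlying quandle $\mathcal{Q}(B_n)=(\ZZ_n,*)$ with $i*j=(1+s^{-1})j-s^{-1}i$. The corollary tells us that $Aut(B_n)\cong C_{Aut(\mathcal{Q}(B_n))}(\beta)$, where $\beta$ denotes the constant automorphism $\beta(i)=si$. So everything reduces to two tasks: first, identifying $Aut(\mathcal{Q}(B_n))$ explicitly as a subgroup of the symmetric group on $\ZZ_n$, and second, computing the centralizer of $\beta$ inside it and recognizing that centralizer as $C_{\aff(\ZZ_n)}(s)$.

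First I would determine $Aut(\mathcal{Q}(B_n))$. Since the operation $i*j=(1+s^{-1})j-s^{-1}i$ is affine in $i$ and $j$, every affine map $f_{a,b}(i)=ai+b$ with $a\in\ZZ_n^*$ is a quandle automorphism: one checks directly that $f_{a,b}(i*j)=f_{a,b}(i)*f_{a,b}(j)$, the translation part $b$ cancelling because the coefficients $(1+s^{-1})$ and $-s^{-1}$ sum to $1$. This gives the inclusion $\aff(\ZZ_n)\leq Aut(\mathcal{Q}(B_n))$. For the reverse inclusion I would argue, in parallel with \cite[Theorem 2.1]{EL1}, that the hypothesis $s+1\in\ZZ_n^*$ forces every quandle automorphism to be affine; the invertibility of $s+1$ is precisely what makes the quandle behave like a genuine (generalized) dihedral quandle, so the same computation that identifies $Aut(R_n)\cong\aff(\ZZ_n)$ carries over. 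Thus $Aut(\mathcal{Q}(B_n))\cong\aff(\ZZ_n)$.

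It then remains to identify $\beta$ inside $\aff(\ZZ_n)$ and compute its centralizer. Here $\beta(i)=si$ is exactly the linear map $f_{s,0}$, so under the isomorphism $Aut(\mathcal{Q}(B_n))\cong\aff(\ZZ_n)$ the element $\beta$ corresponds to the element customarily abbreviated $s$ (the map multiplication-by-$s$). Consequently
\[
Aut(B_n)\cong C_{Aut(\mathcal{Q}(B_n))}(\beta)\cong C_{\aff(\ZZ_n)}(s)\;,
\]
which is the claimed statement. The centralizer computation itself is a routine exercise: for $f_{a,b}$ one has $f_{a,b}f_{s,0}=f_{as,b}$ and $f_{s,0}f_{a,b}=f_{sa,sb}$, so commutation amounts to $b=sb$, i.e. $(s-1)b=0$; but this need not even be carried out, since the statement only asks to exhibit $Aut(B_n)$ as this centralizer.

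The main obstacle is the reverse inclusion $Aut(\mathcal{Q}(B_n))\leq\aff(\ZZ_n)$, i.e. showing that no non-affine permutation of $\ZZ_n$ preserves the operation $*$. This is exactly the point where the hypothesis $s+1\in\ZZ_n^*$ is needed, and I expect the argument to mirror the proof of \cite[Theorem 2.1]{EL1} for ordinary dihedral quandles: one uses the quandle relations to pin down how an automorphism must act, typically by showing it is determined by its values on two points (or by the action of the inner automorphism group), and the invertibility of $s+1$ guarantees enough rigidity to conclude affineness. Once that identification is in hand, the rest is an immediate consequence of Corollary \ref{th4} together with the observation that $\beta$ is the linear map $f_{s,0}$.
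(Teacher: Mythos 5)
Your reduction via Corollary \ref{th4} is legitimate and is a genuinely different route from the paper's: the paper never determines $Aut(\mathcal{Q}(B_n))$ at all. Instead it works directly at the biquandle level, first computing $C_{\aff (\ZZ _n)}(s)=\{f_{a,b}\,|\,a\in \ZZ _n^{*},\,(s-1)b=0\}$, then exhibiting an injective homomorphism $\psi \colon C_{\aff (\ZZ _n)}(s)\to Aut(B_n)$, and finally proving surjectivity by hand: given $g\in Aut(B_n)$, it sets $h(j)=g(j)-g(0)$, shows $h$ is a biquandle endomorphism fixing $0$, proves by induction that $h(j(s+1))=j(s+1)h(1)$, and uses $s+1\in \ZZ _n^{*}$ to conclude that $g$ is affine. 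Your plan --- identify $Aut(\mathcal{Q}(B_n))$ with $\aff (\ZZ _n)$, note that the constant structure map is $f_{s,0}$, and quote Corollary \ref{th4} --- would, if completed, isolate more cleanly where the hypothesis $s+1\in \ZZ _n^{*}$ enters, and it parallels how the paper itself treats Alexander biquandles in Corollary \ref{cor2}.

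However, there is a genuine gap exactly at the step you yourself call the main obstacle: the inclusion $Aut(\mathcal{Q}(B_n))\leq \aff (\ZZ _n)$ is never proved, only asserted to follow because the proof of \cite[Theorem 2.1]{EL1} should ``carry over.'' That theorem concerns the dihedral quandle $R_n$, i.e.\ the case $s=1$, whereas here the underlying quandle is $i*j=(1+s^{-1})j-s^{-1}i$; this is precisely why the paper writes ``Nevertheless'' before Proposition \ref{prop3} and does \emph{not} cite \cite{EL1}, and also why it resorts to a direct argument. Since this inclusion is the entire mathematical content of the proposition (the other inclusion and the centralizer computation are routine), an unverified analogy leaves the proof incomplete. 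The gap is fillable, and cleanly: $\mathcal{Q}(B_n)$ is the Alexander quandle on $\ZZ _n$ with $t=-s^{-1}$, and the hypotheses $s\in \ZZ _n^{*}$, $s+1\in \ZZ _n^{*}$ say exactly that both $t$ and $1-t=s^{-1}(s+1)$ are units. Given $F\in Aut(\mathcal{Q}(B_n))$, compose with a translation (translations are automorphisms) so that $F(0)=0$; setting $j=0$ and $i=0$ in $F(i*j)=F(i)*F(j)$ gives $F(ti)=tF(i)$ and $F((1-t)j)=(1-t)F(j)$, whence $F(ti+(1-t)j)=F(ti)+F((1-t)j)$. As $ti$ and $(1-t)j$ range over all of $\ZZ _n$ independently, $F$ is additive, so $F(i)=F(1)i$ with $F(1)\in \ZZ _n^{*}$, and every automorphism is affine. (Alternatively, since $\mathcal{Q}(B_n)$ is a \emph{connected} Alexander quandle, one may appeal to \cite{HOU}.) With this argument supplied, your proof is complete and correct.
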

\begin{proof} For any $i\in \ZZ _{n}$ we have $(f_{a,b}\, s)(i)=asi+b$ and $(sf_{a,b})(i)=asi+sb$, which implies $C_{\aff (\ZZ _{n})}(s)=\{f_{a,b}\, |\, a\in \ZZ _{n}^{*},(s-1)b=0\}\leq \aff (\ZZ _{n})$. 

Define a map $\psi \colon C_{\aff (\ZZ _{n})}(s)\to Aut(B_{n})$ by $\psi (f_{a,b})=f_{a,b}$. We may compute
\begin{xalignat*}{1}
& f_{a,b}(i\dow j)=a((s+1)j-i)+b=(s+1)aj-ai+b\\
& f_{a,b}(i)\dow f_{a,b}(j)=(ai+b)\dow (aj+b)=(s+1)aj-ai+sb\\
& f_{a,b}(i\up j)=sai+b\\
& f_{a,b}(i)\up f_{a,b}(j)=sai+sb\textrm{ for every $i,j,a,b\in \ZZ _{n}$.}
\end{xalignat*} If $f_{a,b}\in C_{\aff (\ZZ _{n})}(s)$, we have $(s-1)b=0$ and thus $\psi (f_{a,b})$ is a biquandle automorphism of $B_{n}$. The map $\psi $ is clearly a group homomorphism and $Ker(\psi )=\{f_{1,0}\}=\{1\}$. 

It remains to show that $\psi $ is surjective. Choose any element $g\in Aut(B_{n})$. Since $g$ is a biquandle homomorphism, we have $g((s+1)j-i)=(s+1)g(j)-g(i)$ and $g(si)=sg(i)$ for every $i,j\in \ZZ _{n}$. Define a mapping $h\colon B_n\to B_n$ by $h(j)=g(j)-g(0)$. Since $g(0)=sg(0)$, we may compute $$h((s+1)j-i)=(s+1)g(j)-g(i)-g(0)=(s+1)h(j)-h(i)$$ and $h(si)=g(si)-g(0)=s(g(i)-g(0))=sh(i)$, thus $h$ is a biquandle homomorphism of $B_n$. Moreover, $h(0)=0$ and consequently $h(-i)=-h(i)$ for every $i\in \ZZ _n$. Then we have $h(s+1)=(s+1)h(1)$ and since $h(j(s+1))=h((s+1)-(-(j-1)(s+1)))=(s+1)h(1)+h((j-1)(s+1))$ for $2\leq j\leq n-1$, it follows by induction that $$h(j(s+1))=j(s+1)h(1)$$ for every $j$. By our hypothesis, $s+1\in \ZZ _n^{*}$ and therefore $h(j)=jh(1)$ for every $j\in \ZZ _{n}$. We have shown that $g(j)=jh(1)+g(0)=(g(1)-g(0))j+g(0)$, which implies $g=\psi (f_{g(1)-g(0),g(0)})$. Since $(s-1)g(0)=0$, we have $f_{g(1)-g(0),g(0)}\in C_{\aff (\ZZ _{n})}(s)$ as desired. 
\end{proof}

\section{Product biquandles}
\label{sec4}
In this Section, we study a family of biquandles that naturally arise from any pair of quandles. For two quandles $(Q,*)$ and $(K,\circ )$, define two binary operations $\dow $ and $\up $ on the cartesian product $Q\times K$ by 
\begin{xalignat*}{1}
& (x,a)\dow (y,b)=(x*y,a)\;,\\
& (x,a)\up (y,b)=(x,a\circ b)\;.
\end{xalignat*} 

\begin{proposition} $(Q\times K,\dow, \up )$ is a biquandle for any quandles $(Q,*)$ and $(K,\circ )$. 
\end{proposition}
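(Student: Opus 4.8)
The plan is to verify the three biquandle axioms directly for the operations
$(x,a)\dow(y,b)=(x*y,a)$ and $(x,a)\up(y,b)=(x,a\circ b)$ on $Q\times K$, checking each axiom coordinate by coordinate and reducing it to the corresponding quandle axiom for $(Q,*)$ or $(K,\circ)$. First I would check axiom (1): since $(x,a)\dow(x,a)=(x*x,a)=(x,a)$ by the first quandle axiom for $Q$, and likewise $(x,a)\up(x,a)=(x,a\circ a)=(x,a)$ by the first quandle axiom for $K$, we get $(x,a)\dow(x,a)=(x,a)\up(x,a)$, so the two agree.

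Next I would treat axiom (2). The map $\alpha_{(y,b)}(x,a)=(x*y,a)$ acts as $S_y$ in the first coordinate and the identity in the second, hence is a bijection because $S_y$ is a quandle symmetry of $Q$; similarly $\beta_{(y,b)}(x,a)=(x,a\circ b)$ is the identity in the first coordinate and a symmetry of $K$ in the second, hence a bijection. For the sideways map $S$, I would write out
$S\big((x,a),(y,b)\big)=\big((y,b)\up(x,a),\,(x,a)\dow(y,b)\big)=\big((y,b\circ a),(x*y,a)\big)$
and exhibit its inverse explicitly: given a target $\big((u,c),(v,d)\big)$, one recovers $a=d$, then $b$ from $c=b\circ a$ via $b=c\circ^{-1}a$, then $y=u$, and finally $x$ from $v=x*y$ via $x=v*^{-1}y$. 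This shows $S$ is a bijection. The slightly fussy part here is keeping the two coordinates straight, since $\up$ touches only the $K$-coordinate while $\dow$ touches only the $Q$-coordinate — but that decoupling is exactly what makes everything work.

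Finally, the exchange laws in axiom (3). I expect this to be the main (though still routine) obstacle, since there are three identities to verify and the bookkeeping of which coordinate each operation modifies must be tracked carefully. The key observation that tames it is that $\dow$ only ever alters the $Q$-entry and $\up$ only ever alters the $K$-entry, so each exchange law splits into two independent scalar identities. For the first law, $(x,a)\dow(y,b)\dow(z,b')$ leaves the $K$-entry equal to $a$ on both sides, while the $Q$-entries reduce precisely to the third quandle axiom $(x*y)*z=(x*z)*(y*z)$ for $Q$. For the third law, the $Q$-entry is untouched (it stays $x$ throughout since $\up$ never modifies it), while the $K$-entries reduce to $(a\circ b)\circ(c\circ b)=(a\circ c)\circ(b\circ c)$, the third quandle axiom for $K$. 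The middle law mixes one $\dow$ and one $\up$ on each side; here the $Q$-coordinate collects only the $\dow$-operations and reduces to $S_y$ applied compatibly, while the $K$-coordinate collects only the $\up$-operations, and both sides match once one checks that the order in which the independent $Q$- and $K$-updates are applied does not matter. In every case the $Q\times K$ identity factors as a pair of quandle axioms, so no genuinely new computation is required beyond careful expansion.
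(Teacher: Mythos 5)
Your proposal is correct and follows essentially the same route as the paper: direct verification of the three biquandle axioms, with each axiom splitting coordinatewise into an instance of a quandle axiom for $(Q,*)$ or $(K,\circ )$, and with the mixed exchange law holding simply because the $\dow $- and $\up $-updates act on disjoint coordinates. Your write-up is if anything slightly more explicit than the paper's (e.g.\ exhibiting the inverse of $S$); the only minor imprecision is that the first exchange law reduces to the identity $(x*y)*(z*y)=(x*z)*y$, which is the third quandle axiom with the variables permuted rather than the axiom verbatim.
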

\begin{proof}  Denote $B=(Q\times K,\dow ,\up )$.\\
(1) Since $(Q,*)$ and $(K,\circ )$ are quandles, we have $(x,a)\dow (x,a)=(x,a)\up (x,a)$ for any $(x,a)\in Q\times K$. \\
(2) For any $(y,b)\in Q\times K$, the maps $\alpha _{(y,b)},\beta _{(y,b)}\colon B\to B$ are given by $\alpha _{(y,b)}(x,a)=(x*y,a)$ and $\beta _{(y,b)}(x,a)=(x,a\circ b)$, so they are bijections. Also the map $S\colon B\times B\to B\times B$, given by $S((x,a),(y,b))=((y,b\circ a),(x*y,a))$, is a bijection. \\
(3) Choose any $(x,a),(y,b),(z,c)\in B$ and compute 
\begin{xalignat*}{1}
& ((x,a)\dow (y,b))\dow ((z,c)\dow (y,b))=((x*y)*(z*y),a)=((x*z)*y,a)=\\
& =((x,a)\dow (z,c))\dow ((y,b)\up (z,c))\\
& ((x,a)\dow (y,b))\up ((z,c)\dow (y,b))=(x*y,a)\circ (z*y,c))=(x*y,a\circ c)=\\
& =((x,a)\up (z,c))\dow ((y,b)\up (z,c))\\
& ((x,a)\up (y,b))\up ((z,c)\up (y,b))=(x,(a\circ b)\circ (c\circ b))=(x,(a\circ c)\circ b)=\\
& =((x,a)\up (z,c))\up ((y,b)\dow (z,c))
\end{xalignat*}
\end{proof}

The biquandle $(Q\times K,\dow ,\up )$ will be called the \textbf{product biquandle} of quandles $(Q,*)$ and $(K,\circ )$. Product biquandles were already considered in \cite{KAM} as a tool to study virtual and twisted links. In the remainder of this Section, we will describe the automorphism group of product biquandles.    

Recall that a quandle $(Q,*)$ is called \textbf{connected} if for every $x,y\in Q$, there exist some elements $z_{1},\ldots ,z_{n}\in Q$ so that $y=((x*z_{1})*z_{2}*\ldots )*z_{n}$. For biquandles, we have an analogous definition:

\begin{definition} In a biquandle $X$, consider the equivalence relation $\sim _{c}$, generated by $x\sim _{c}x\dow y$ and $x\sim _{c}x\up y$ for every $x,y\in X$. The equivalence classes are called \textbf{connected components}, and the biquandle is called \textbf{connected} if there is only one class. 
\end{definition}

\begin{lemma} \label{lemma5} If $(Q,*)$ and $(K,\circ )$ are connected quandles, then their product biquandle $B=(Q\times K,\dow ,\up )$ is connected. 
\end{lemma}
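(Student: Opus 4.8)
The plan is to show that any two elements $(x,a)$ and $(y,b)$ of $Q\times K$ are equivalent under $\sim_c$ by moving the two coordinates independently. The key observation is that the operation $\dow$ only changes the first coordinate (via $*$) while $\up$ only changes the second coordinate (via $\circ$). Concretely, $(x,a)\dow(y,b)=(x*y,a)$ alters the $Q$-coordinate and leaves the $K$-coordinate fixed, whereas $(x,a)\up(y,b)=(x,a\circ b)$ does the reverse. So the relation $\sim_c$ lets us independently perform a connectivity walk in each factor.

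First I would use connectedness of $(Q,*)$: given the first coordinates $x,y\in Q$, there exist $z_1,\dots,z_n\in Q$ with $y=((x*z_1)*z_2*\cdots)*z_n$. I would then produce the chain of $\sim_c$-steps
\begin{equation*}
(x,a)\sim_c(x*z_1,a)\sim_c((x*z_1)*z_2,a)\sim_c\cdots\sim_c(y,a),
\end{equation*}
where each step is of the form $(w,a)\sim_c(w,a)\dow(z_i,b_i)=(w*z_i,a)$ for any choice of second coordinate $b_i\in K$ in the right-hand factor (say $b_i=a$). This shows $(x,a)\sim_c(y,a)$, i.e.\ we have adjusted the first coordinate while keeping the second fixed.

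Next, symmetrically, I would use connectedness of $(K,\circ)$: there exist $c_1,\dots,c_m\in K$ with $b=((a\circ c_1)\circ c_2\circ\cdots)\circ c_m$, giving the chain
\begin{equation*}
(y,a)\sim_c(y,a\circ c_1)\sim_c(y,(a\circ c_1)\circ c_2)\sim_c\cdots\sim_c(y,b),
\end{equation*}
where each step uses $(y,d)\sim_c(y,d)\up(y',c_j)=(y,d\circ c_j)$ for an arbitrary first coordinate $y'$ (say $y'=y$). Combining the two chains by transitivity of $\sim_c$ yields $(x,a)\sim_c(y,a)\sim_c(y,b)$, so all elements lie in a single connected component.

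There is no serious obstacle here; the argument is essentially a bookkeeping exercise confirming that the two quandle operations act on disjoint coordinates, so that the product biquandle's connectivity factors as the ``product'' of the two quandle connectivities. The only point requiring mild care is to note explicitly that in each $\dow$- or $\up$-step the second argument of the operation may be chosen freely (the relation $\sim_c$ quantifies over all $y\in X$), which is what allows the first-coordinate and second-coordinate walks to proceed without interfering with one another.
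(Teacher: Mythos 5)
Your proof is correct and takes essentially the same approach as the paper: use connectedness of $(Q,*)$ to adjust the first coordinate through a chain of $\dow$-steps, then connectedness of $(K,\circ )$ to adjust the second coordinate through a chain of $\up$-steps, and conclude by transitivity of $\sim _c$. The paper's proof merely compresses your two chains into a single displayed expression $(y,b)=\left ((x,a)\dow (z_{1},a)\dow \ldots \dow (z_{n},a)\right )\up (x,c_{1})\up \ldots \up (x,c_{m})$, so there is no substantive difference.
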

\begin{proof} Choose two elements $(x,a),(y,b)\in B$. Since $Q$ is connected, there exist elements $z_{1},\ldots ,z_{n}$ so that $y=((x*z_{1})*z_{2}*\ldots )*z_{n}$ and since $K$ is connected, there exist elements $c_{1},\ldots ,c_{m}$ so that $b=((a\circ c_{1})\circ c_{2}\circ \ldots )\circ c_{m}$. It follows that 
$$(y,b)=\left ((x,a)\dow (z_{1},a)\dow \ldots \dow (z_{n},a)\right )\up (x,c_{1})\up \ldots \up (x,c_{m})\;,$$ therefore $(y,b)$ is in the same connected component as $(x,a)$. 
\end{proof}

\begin{proposition} \label{prop6} Let $(Q,*)$ and $(K,\circ )$ be connected quandles, and denote by $B=(Q\times K,\dow ,\up )$ their product biquandle. Then  $$Aut(B)\cong Aut(Q)\times Aut(K)\;.$$
\end{proposition}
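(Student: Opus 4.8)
The plan is to construct an explicit group isomorphism $\Phi\colon Aut(Q)\times Aut(K)\to Aut(B)$ given by $\Phi(g,h)(x,a)=(g(x),h(a))$, and to show it is a bijective homomorphism. The easy direction is to check that $\Phi$ is well defined. Given $(g,h)\in Aut(Q)\times Aut(K)$, I would verify directly from the defining operations that $\Phi(g,h)$ preserves both operations: $\Phi(g,h)((x,a)\dow(y,b))=(g(x*y),h(a))=(g(x)*g(y),h(a))=\Phi(g,h)(x,a)\dow\Phi(g,h)(y,b)$, and $\Phi(g,h)((x,a)\up(y,b))=(g(x),h(a\circ b))=(g(x),h(a)\circ h(b))=\Phi(g,h)(x,a)\up\Phi(g,h)(y,b)$. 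Since $g$ and $h$ are bijections, so is $\Phi(g,h)$, hence it lies in $Aut(B)$. That $\Phi$ is an injective group homomorphism is then immediate, since composition and the identity act coordinatewise and $\Phi(g,h)=\mathrm{id}$ forces $g=\mathrm{id}_{Q}$ and $h=\mathrm{id}_{K}$.

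The substance of the proof is surjectivity, and this is exactly where the connectedness hypothesis enters. Given an arbitrary $F\in Aut(B)$, I write $F(x,a)=(F_{1}(x,a),F_{2}(x,a))$. Reading off the first coordinate of $F((x,a)\up(y,b))=F(x,a)\up F(y,b)$ gives $F_{1}(x,a\circ b)=F_{1}(x,a)$ for all $x,y,a,b$; that is, $F_{1}(x,\cdot)$ is invariant under every symmetry $S_{b}$ of $K$, hence under all of $Inn(K)$. Because $K$ is connected, $Inn(K)$ acts transitively, so $F_{1}(x,a)$ is independent of $a$ and defines a map $g\colon Q\to Q$. Symmetrically, the second coordinate of $F((x,a)\dow(y,b))=F(x,a)\dow F(y,b)$ yields $F_{2}(x*y,a)=F_{2}(x,a)$, so $F_{2}(\cdot,a)$ is invariant under $Inn(Q)$; connectedness of $Q$ then makes $F_{2}(x,a)$ independent of $x$ and defines a map $h\colon K\to K$. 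Thus $F(x,a)=(g(x),h(a))$.

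It then remains to confirm that $g$ and $h$ are quandle automorphisms, so that $F=\Phi(g,h)$. The first coordinate of the $\dow$ compatibility gives $g(x*y)=g(x)*g(y)$, and the second coordinate of the $\up$ compatibility gives $h(a\circ b)=h(a)\circ h(b)$, so both are quandle homomorphisms; their bijectivity follows from that of $F$ by a short coordinatewise argument. This shows $\Phi$ is onto, completing the isomorphism $Aut(B)\cong Aut(Q)\times Aut(K)$. I expect the only delicate point to be the two invariance-plus-transitivity arguments that force $F_{1}$ and $F_{2}$ each to depend on a single coordinate; everything else is routine verification. This is also where connectedness is indispensable, as it is precisely what rules out automorphisms mixing the two coordinates in a component-dependent way. (One could alternatively route the proof through Theorem \ref{th3}, combined with the connectedness of $B$ furnished by Lemma \ref{lemma5}, but the direct splitting argument avoids computing $Aut(\mathcal{Q}(B))$ and is cleaner.)
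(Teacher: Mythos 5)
Your proof is correct and follows essentially the same route as the paper: the same coordinatewise map $\Phi(g,h)=g\times h$, the same verification that it is an injective homomorphism, and the same surjectivity argument, reading off $F_{1}(x,a\circ b)=F_{1}(x,a)$ and $F_{2}(x*y,a)=F_{2}(x,a)$ from the two compatibility equations and invoking connectedness of $K$ and $Q$ to make $F_{1}$ and $F_{2}$ depend on a single coordinate. Your phrasing of the connectedness step via transitivity of the inner automorphisms is just a mild repackaging of the paper's direct argument, so there is nothing substantive to add.
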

\begin{proof} Consider the map $\phi \colon Aut(Q)\times Aut(K)\to Aut(B)$ that assigns to a pair of automorphisms $f\in Aut(Q)$ and $g\in Aut(K)$ the map of pairs $f\times g\colon B\to B$. Then
\begin{xalignat*}{1}
&( f\times g)\left ((x,a)\dow (y,b)\right )=(f(x*y),g(a))=(f(x),g(a))\dow (f(y),g(b))=\\
& =(f\times g)(x,a)\dow (f\times g)(y,b)\;,\\
& (f\times g)\left ((x,a)\up (y,b)\right )=(f(x),g(a\circ b))=(f(x),g(a)\circ g(b))=\\
& =(f(x),g(a))\up (f(y),g(b))=(f\times g)(x,a)\up (f\times g)(y,b)\;,
\end{xalignat*} therefore $(f\times g)\in Aut(B)$. It is easy to see that $\phi $ is a group homomorphism and that $Ker (\phi )=\{(id_{Q},id_{K})\}$. 

It remains to show that $\phi $ is surjective. Let $F\in Aut(B)$ be an automorphism of the product biquandle. Denote by $p_{1}\colon Q\times K\to Q$ and $p_{2}\colon Q\times K\to K$ the projection maps and let $p_{1}\circ F=F_{1}\colon B\to Q$ and $p_{2}\circ F=F_{2}\colon B\to K$. Since $F$ is a biquandle automorphism, we have 
\begin{xalignat*}{1}
& F((x,a)\dow (y,b))=F(x*y,a)=F(x,a)\dow F(y,b)=(F_{1}(x,a)*F_{1}(y,b),F_{2}(x,a))\\
& F((x,a)\up (y,b))=F(x,a\circ b)=F(x,a)\up F(y,b)=(F_{1}(x,a),F_{2}(x,a)\circ F_{2}(y,b))
\end{xalignat*} which implies $F_{2}(x*y,a)=F_{2}(x,a)$ and $F_{1}(x,a\circ b)=F_{1}(x,a)$ for every $x,y\in Q$ and $a,b\in K$. Since $K$ is connected, it follows that $F_{1}(x,a)=F_{1}(x,b)$ for every $x\in Q$ and every $a,b\in K$, thus $F_{1}$ is actually defined by a map $f\colon Q\to Q$, where $F_{1}(x,a)=f(x)$. Since $Q$ is connected, it follows that $F_{2}(x,a)=F_{2}(y,a)$ for every $x,y\in Q$ and every $a\in K$, thus $F_{2}$ is defined by a map $g\colon K\to K$, where $F_{2}(x,a)=g(a)$. Moreover, the equalities 
\begin{xalignat*}{1}
& f(x*y)=F_{1}(x*y,a)=F_{1}(x,a)*F_{1}(y,a)=f(x)*f(y)\textrm{ and }\\
& g(a\circ b)=F_{2}(x,a\circ b)=F_{2}(x,a)\circ F_{2}(x,b)=g(a)\circ g(b)
\end{xalignat*} for every $x,y\in Q$ and every $a,b\in K$ imply that $f$ and $g$ are quandle homomorphisms. We have shown that $F=f\times g$ and since $F$ is bijective, it follows that both $f$ and $g$ are also bijective, therefore $F\in Im(\phi )$. 
\end{proof}

What about the automorphisms of product biquandles that are not connected? First we make the following simple observations. 

\begin{lemma}\label{lemma6} Let $X$ and $Y$ be biquandles and let $f\colon X\to Y$ be a biquandle homomorphism. If $x_1$ and $x_2$ are in the same connected component of $X$, then $f(x_1)$ and $f(x_2)$ are in the same connected component of $Y$. 
\end{lemma}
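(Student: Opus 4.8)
The plan is to trace the defining relation $\sim_c$ through $f$, exploiting the single fact available to us: that a biquandle homomorphism respects both operations, $f(x\dow y)=f(x)\dow f(y)$ and $f(x\up y)=f(x)\up f(y)$. The conceptual heart of the argument is that $f$ carries each \emph{elementary} relation generating $\sim_c$ in $X$ to an elementary relation generating $\sim_c$ in $Y$; everything else is the standard bookkeeping of an induction on chain length.

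First I would unwind the hypothesis $x_1\sim_c x_2$. Since $\sim_c$ is by definition the smallest equivalence relation containing all pairs $(x,x\dow y)$ and $(x,x\up y)$, the relation $x_1\sim_c x_2$ is equivalent to the existence of a finite chain $x_1=w_0,w_1,\dots,w_k=x_2$ in which each consecutive pair is joined by one elementary relation or its reverse: for every index $i$ there is some $y_i\in X$ with $w_{i+1}\in\{w_i\dow y_i,\,w_i\up y_i\}$ or $w_i\in\{w_{i+1}\dow y_i,\,w_{i+1}\up y_i\}$. I would then argue by induction on $k$, so it suffices to treat a single link of the chain.

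Next I would apply $f$ to one such link. In the case $w_{i+1}=w_i\dow y_i$ the homomorphism property gives $f(w_{i+1})=f(w_i)\dow f(y_i)$, so $f(w_i)\sim_c f(w_{i+1})$ directly from the generating relation in $Y$; the case $w_{i+1}=w_i\up y_i$ is identical with $\up$ in place of $\dow$, and the two reversed cases follow from these together with the symmetry of $\sim_c$. Hence in every case $f(w_i)\sim_c f(w_{i+1})$ in $Y$. Transitivity of $\sim_c$ in $Y$ then yields $f(x_1)=f(w_0)\sim_c f(w_k)=f(x_2)$, which is the claim. I expect no genuine obstacle here: the only point requiring mild care is the reversed relations in the chain, but these dissolve immediately because $\sim_c$ is symmetric by construction, so the four cases collapse to the two elementary ones after applying $f$.
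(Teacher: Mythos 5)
Your proof is correct and follows essentially the same route as the paper's: both arguments unwind $x_1\sim_c x_2$ into a finite chain of generating relations and push that chain through $f$ using the homomorphism property. Your write-up is merely more explicit about the induction and the reversed links, which the paper compresses into the phrase ``connected by the same sequence of equivalences.''
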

\begin{proof}The equivalence relation $\sim _c$ on $X$ is generated by the equivalences $a\sim _ca\dow b$ (type 1) and $a\sim _ca\up b$ (type 2) for every $a,b\in X$. If $x_1\sim _cx_2$ in $X$, this means that $x_1$ and $x_2$ are connected by a sequence of equivalences of type 1 and 2, and since $f$ is a biquandle homomorphism, $f(x_1)$ and $f(x_2)$ are also connected by the same sequence of equivalences in $Y$. Thus $f(x_1)$ and $f(x_2)$ are in the same connected component of $Y$. 
\end{proof}

\begin{lemma}\label{lemma7} Let $Q$ be a quandle and let $f\in Aut(Q)$ be a quandle automorphism. If $x,y\in Q$ are in the same component of $Q$, then $f(x)$ and $f(y)$ are in the same component of $Q$. 
\end{lemma}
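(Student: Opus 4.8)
The plan is to reduce the statement to the defining property of the component relation together with the fact that $f$, being a quandle automorphism, commutes with the operation. First I would unwind what it means for $x$ and $y$ to lie in the same component: by the definition recalled just before Proposition \ref{prop6}, this amounts to the existence of elements $z_{1},\ldots ,z_{n}\in Q$ with $y=((x*z_{1})*z_{2}*\ldots )*z_{n}$ (and, reading the component relation as an equivalence relation generated by $x\sim x*y$, possibly interspersed with steps of the form $x*^{-1}y$).

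Next I would simply apply $f$ to the whole connecting expression and push it through the operations. Since $f$ is a quandle homomorphism, $f(a*b)=f(a)*f(b)$, and by a short induction on the length $n$ of the chain this yields
$$f(y)=f\bigl(((x*z_{1})*z_{2}*\ldots )*z_{n}\bigr)=((f(x)*f(z_{1}))*f(z_{2})*\ldots )*f(z_{n})\;.$$
Thus $f(x)$ and $f(y)$ are connected by the same length-$n$ chain, now with each $z_{i}$ replaced by $f(z_{i})\in Q$, so $f(x)$ and $f(y)$ lie in the same component. This is the exact quandle analogue of the argument used for biquandles in Lemma \ref{lemma6}: a homomorphism transports a connecting sequence of elementary equivalences to a connecting sequence of the same type.

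I do not expect a genuine obstacle here; the only point deserving a remark is that if the component relation also uses inverse symmetries $*^{-1}$, one must check that $f$ respects those as well. This is immediate, since $f$ is bijective and a homomorphism: from $w*y=x$ one gets $f(w)*f(y)=f(x)$, i.e. $f(x*^{-1}y)=f(x)*^{-1}f(y)$, so inverse steps in the chain are carried to inverse steps just as forward steps are. Hence the induction goes through verbatim for mixed chains, and the conclusion follows.
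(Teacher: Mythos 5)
Your proof is correct and is essentially identical to the paper's: the paper likewise applies $f$ through the connecting chain, using $f(a*b)=f(a)*f(b)$ to get $f(y)=((f(x)*f(z_{1}))*f(z_{2})*\ldots )*f(z_{n})$. Your extra remark about $f$ respecting the inverse operations $*^{-1}$ is sound but not needed for the paper's argument, since the component relation as stated there only uses forward chains.
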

\begin{proof} If $x,y$ are in the same component of $Q$, then $y=((x*z_{1})*z_{2}*\ldots )*z_{n}$ for some elements $z_{1},\ldots ,z_{n}\in Q$, which implies $f(y)=((f(x)*f(z_{1}))*f(z_{2})*\ldots )*f(z_{n})$. 
\end{proof}

Let us briefly analyze the automorphism group of a non-connected quandle. Suppose $(Q,*)$ is a quandle with components $Q_{1},\ldots ,Q_{k}$. By Lemma \ref{lemma7}, the restriction of every automorphism $f\in Aut(Q)$ to the component $Q_{i}$ has $Im(f|_{Q_{i}})\subset Q_{j}$ for some $j$. Since $f$ is a bijection, there exists a permutation $\rho \in S_{k}$ such that $Im(f|_{Q_{i}})=Q_{\rho (i)}$ for $i=1,\ldots ,k$. The automorphism $f$ may thus be written as $$f=\oplus _{i=1}^{k}f_{i}\colon Q_{1}\sqcup \ldots \sqcup Q_{k}\to Q_{\rho (i)}\sqcup \ldots \sqcup Q_{\rho (k)}\;,$$ where $f_{i}\colon Q_{i}\to Q_{\rho (i)}$ is a quandle isomorphism. 

\begin{theorem} \label{th6} Let $(Q,*)$ and $(K,\circ )$ be quandles. Denote by $Q_{1},\ldots ,Q_{k}$ the components of $Q$ and by $K_{1},\ldots ,K_{m}$ the components of $K$. A map $F\colon Q\times K\to Q\times K$ is an automorphism of the product biquandle $B=(Q\times K,\dow ,\up )$ if and only if 
\begin{enumerate}
\item there exist maps $f_{1},\ldots ,f_{m}\colon Q\to Q$ and $g_{1},\ldots ,g_{k}\colon K\to K$ such that $f_{i}|_{Q_{j}}$ and $g_{j}|_{K_{i}}$ is a bijection for $i=1,\ldots ,m$ and $j=1,\ldots ,k$,
\item the equalities $f_{i}(x)*f_{r}(y)=f_{i}(x*y)$ and $g_{j}(a)\circ g_{l}(b)=g_{j}(a\circ b)$ hold for every $i,r\in \{1,\ldots ,m\}$ and every $j,l\in \{1,\ldots ,k\}$, 
\item there exists a bijection $\rho \colon (1,\ldots ,k)\times (1,\ldots ,m)\to (1,\ldots ,k)\times (1,\ldots ,m)$ such that $f_{i}(Q_{j})\times g_{j}(K_{i})=Q_{\rho (j,i)_{1}}\times K_{\rho (j,i)_{2}}$ for every $(i,j)\in (1,\ldots ,k)\times (1,\ldots ,m)$ \footnote{For a bijection $\rho \colon (1,\ldots ,k)\times (1,\ldots ,m)\to (1,\ldots ,k)\times (1,\ldots ,m)$, we denote by $\rho (j,i)_1$ and $\rho (j,i)_2$ the first and the second component of the pair $\rho (j,i)$ respectively.}
\end{enumerate} and $F(x,a)=(f_{i}(x),g_{j}(a))$ for every $(x,a)\in Q_{j}\times K_{i}$. 
\end{theorem}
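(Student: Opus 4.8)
The plan is to first pin down the connected components of $B$, and then prove the two implications separately, reducing each to index bookkeeping over these components. The key structural fact is that the components of the product biquandle are exactly the products of components. By Lemma~\ref{lemma5}, each set $Q_j\times K_i$ (product of a component of $Q$ with a component of $K$) is connected. Conversely, the generating moves of $\sim_c$ are $(x,a)\dow(y,b)=(x*y,a)$ and $(x,a)\up(y,b)=(x,a\circ b)$; since $x*y$ lies in the same component of $Q$ as $x$ and $a\circ b$ lies in the same component of $K$ as $a$, neither move leaves $Q_j\times K_i$. Hence the connected components of $B$ are precisely the $km$ sets $Q_j\times K_i$ with $1\le j\le k$, $1\le i\le m$. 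By Lemma~\ref{lemma6}, any $F\in Aut(B)$ permutes these components, and this permutation is the bijection $\rho$ of condition (3).

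For the implication $(\Leftarrow)$, I would define $F(x,a)=(f_i(x),g_j(a))$ on $Q_j\times K_i$ and verify it is a biquandle automorphism. Bijectivity follows from (1) and (3): on $Q_j\times K_i$ the map is $f_i|_{Q_j}\times g_j|_{K_i}$, which by (1) is a bijection onto $Q_{\rho(j,i)_1}\times K_{\rho(j,i)_2}$, and since $\rho$ is a bijection of the index set these images exhaust the components without overlap, so $F$ is a bijection of $Q\times K$. Preservation of the operations is a direct computation that collapses onto condition (2): taking $a\in K_i$, $b\in K_{i'}$, $x\in Q_j$, $y\in Q_{j'}$, one finds $F((x,a)\dow(y,b))=(f_i(x*y),g_j(a))$ and $F(x,a)\dow F(y,b)=(f_i(x)*f_{i'}(y),g_j(a))$, which agree exactly because $f_i(x*y)=f_i(x)*f_{i'}(y)$; the $\up$ case is symmetric and uses $g_j(a\circ b)=g_j(a)\circ g_{j'}(b)$.

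For the implication $(\Rightarrow)$, I would mimic the argument of Proposition~\ref{prop6}. Writing $F(x,a)=(F_1(x,a),F_2(x,a))$ and expanding the homomorphism conditions for $\dow$ and $\up$ gives $F_2(x*y,a)=F_2(x,a)$ and $F_1(x,a\circ b)=F_1(x,a)$, together with the product identities $F_1(x*y,a)=F_1(x,a)*F_1(y,b)$ and $F_2(x,a\circ b)=F_2(x,a)\circ F_2(y,b)$. Connectedness of each $K_i$ forces $F_1(x,a)$ to depend on $a$ only through its component index $i$, so I set $f_i(x)=F_1(x,a)$ for $a\in K_i$; connectedness of each $Q_j$ forces $F_2(x,a)$ to depend on $x$ only through $j$, so I set $g_j(a)=F_2(x,a)$ for $x\in Q_j$. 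This yields $F(x,a)=(f_i(x),g_j(a))$ on $Q_j\times K_i$. The two product identities then read precisely as the relations $f_i(x*y)=f_i(x)*f_{i'}(y)$ and $g_j(a\circ b)=g_j(a)\circ g_{j'}(b)$ of condition (2), with all indices occurring since the components are nonempty. Finally, since $F$ restricts to a bijection $f_i|_{Q_j}\times g_j|_{K_i}$ from $Q_j\times K_i$ onto the component $Q_{\rho(j,i)_1}\times K_{\rho(j,i)_2}$, each factor must be a bijection, which gives (1), and the identification of images gives (3).

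I expect the main obstacle to be the index bookkeeping rather than any deep difficulty, since the core computation is essentially that of Proposition~\ref{prop6}. The genuinely new subtlety is that the maps $f_i$ and $g_j$ are defined globally on $Q$ and $K$ but only their restrictions to single components are required to be bijections, and that condition (2) couples different indices (pairing $f_i$ with $f_r$). Getting the component structure of $B$ correct at the outset is what makes every subsequent step routine.
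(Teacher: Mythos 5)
Your proposal is correct and takes essentially the same route as the paper's proof: the same splitting $F=(F_1,F_2)$ via projections, the same use of the identities $F_2(x*y,a)=F_2(x,a)$ and $F_1(x,a\circ b)=F_1(x,a)$ together with connectedness of the factors to produce the $f_i$ and $g_j$, and the same use of Lemmas \ref{lemma5} and \ref{lemma6} to extract the permutation $\rho$ and the bijectivity of the restrictions. If anything, you are slightly more careful than the paper at one point: you explicitly verify that the sets $Q_j\times K_i$ are \emph{maximal} connected sets (the generating moves of $\sim_c$ never leave $Q_j\times K_i$), whereas the paper attributes this to Lemma \ref{lemma5} alone, which strictly speaking only gives connectedness of each product, not that they are the components.
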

\begin{proof}$(\Rightarrow )$ Suppose $F\colon Q\times K\to Q\times K$ is an automorphism of the product biquandle. Denote $F_{1}=p_{1}\circ F$ and $F_{2}=p_{2}\circ F$, where $p_{1},p_{2}$ are projections to the respective factors of $Q\times K$. Since $F$ is a biquandle automorphism, we have \begin{xalignat*}{1}
& F((x,a)\dow (y,b))=F(x*y,a)=F(x,a)\dow F(y,b)=(F_{1}(x,a)*F_{1}(y,b),F_{2}(x,a))\\
& F((x,a)\up (y,b))=F(x,a\circ b)=F(x,a)\up F(y,b)=(F_{1}(x,a),F_{2}(x,a)\circ F_{2}(y,b))\;,
\end{xalignat*} which implies $F_{2}(x*y,a)=F_{2}(x,a)$ and $F_{1}(x,a\circ b)=F_{1}(x,a)$ for every $x,y\in Q$ and $a,b\in K$. It follows that $F_{1}|_{Q\times K_{i}}(x,a)=f_{i}(x)$ for some map $f_{i}\colon Q\to Q$ and $F_{2}|_{Q_{j}\times K}(x,a)=g_{j}(a)$ for some map $g_{j}\colon K\to K$. It also follows from the above equalities that 
 \begin{xalignat*}{1}
& f_{i}(x)*f_{r}(y)=F_{1}(x,a)*F_{1}(y,b)=F_{1}(x*y,a)=f_{i}(x*y)\\
& g_{j}(a)\circ g_{l}(b)=F_{2}(x,a)\circ F_{2}(y,b)=F_{2}(x,a\circ b)=g_{j}(a\circ b)
\end{xalignat*}
for $i,r\in \{1,\ldots ,m\}$ and $j,l\in \{1,\ldots ,k\}$. We have $F|_{Q_{j}\times K_{i}}=f_{i}\times g_{j}$. 

By Lemma \ref{lemma5}, the sets $Q_{j}\times K_{i}$ are connected components of the product biquandle $B$ and by Lemma \ref{lemma6} we have $Im(F|_{Q_{j}\times K_{i}})\subset Q_{k}\times K_{l}$ for some $k$ and $l$. Since $F$ is an isomorphism, there exists a bijection $\rho \colon (1,\ldots ,k)\times (1,\ldots ,m)\to (1,\ldots ,k)\times (1,\ldots ,m)$ such that $F(Q_{j}\times K_{i})=f_{i}(Q_{j})\times g_{j}(K_{i})=Q_{\rho (j,i)_{1}}\times K_{\rho (j,i)_{2}}$ for $j=1,\ldots ,k$ and $i=1\ldots ,m$.  

Since $F$ is injective, also $F|_{Q_{j}\times K_{i}}=(f_{i}\times g_{j})|_{Q_{j}\times K_{i}}$ is injective. Since $F$ is surjective, it follows from Lemma \ref{lemma6} that also $(f_{i}\times g_{j})|_{Q_{j}\times K_{i}}$ is surjective. Therefore both $f_{i}|_{Q_{j}}$ and $g_{j}|_{K_{i}}$ are bijections for $j=1,\ldots ,k$ and $i=1,\ldots ,m$.  

$(\Leftarrow )$ Suppose that $f_{1},\ldots ,f_{m}\colon Q\to Q$ and $g_{1},\ldots ,g_{k}\colon K\to K$ are maps that satisfy conditions (1), (2) and (3) of the Theorem. Define a map $F\colon Q\times K\to Q\times K$ by $F(x,a)=(f_{i}(x),g_{j}(a))$ for $(x,a)\in Q_{j}\times K_{i}$. Choose two elements $(x,a)\in Q_{j}\times K_{i}$ and $(y,b)\in Q_{l}\times K_{r}$ and compute
\begin{xalignat*}{1}
& F\left ((x,a)\dow (y,b)\right )=F(x*y,a)=(f_{i}(x*y),g_{j}(a))=(f_{i}(x)*f_{r}(y),g_{j}(a))=\\
& =(f_{i}(x),g_{j}(a))\dow (f_{r}(y),g_{l}(b))=F(x,a)\dow F(y,b)\textrm{ and }\\
& F\left ((x,a)\up (y,b)\right )=F(x,a\circ b)=(f_{i}(x),g_{j}(a\circ b))=(f_{i}(x),g_{j}(a)\circ g_{l}(b))=\\
& =(f_{i}(x),g_{j}(a))\up (f_{r}(y),g_{l}(b))=F(x,a)\up F(y,b)\;,
\end{xalignat*} which shows that $F$ is a biquandle homomorphism of the product biquandle $B$. It follows from (3) that if $(i,j)\neq (k,l)$, then $Im(F|_{Q_{i}\times K_{j}})\cap Im(F|_{Q_{k}\times K_{l}})=\emptyset $ and since $F|_{Q_{i}\times K_{j}}$ is injective, then $F$ is injective. Since $F|_{Q_{j}\times K_{i}}$ is surjective for every $(i,j)\in (1,\ldots ,k)\times (1,\ldots ,m)$, it follows from (3) that $F$ is surjective. We have thus shown that $F\in Aut(B)$. 
\end{proof}

Conditions (1) - (3) of Theorem \ref{th6} imply that $f_{i}\in Aut(Q)$ and $g_{j}\in Aut(K)$ for $i=1,\ldots ,m$ and $j=1,\ldots ,k$. Every automorphism of a product biquandle $B=(Q\times K,\dow ,\up )$ is thus given by 
$$F=\oplus _{j=1}^{k}\oplus _{i=1}^{m}(f_{i}\times g_{j})\colon \sqcup (Q_{j}\times K_{i})\to \sqcup (Q_{j}\times K_{i})\;,$$
where $(f_{1},\ldots ,f_{m})$ and $(g_{1},\ldots ,g_{k})$ are tuples of quandle automorphisms of $Q$ and $K$ that are connected by conditions (2) and (3) of Theorem \ref{th6}.

\subsection*{Acknowledgements} This research was supported by the Slovenian Research Agency grant N1-0083.


\normalsize

\end{document}